\newcommand{\R}{\mathbb{R}}
\newcommand{\T}{\mathbb{T}}
\newtheorem{theorem}{Theorem}[section]
\newtheorem{remark}[theorem]{Remark}
\newtheorem{proposition}[theorem]{Proposition}
\newtheorem{lemma}[theorem]{Lemma}
\DeclareMathAlphabet{\mathup}{OT1}{\familydefault}{m}{n}
\newcommand{\dx}[1]{\mathop{}\!\mathup{d} #1}
\newcommand{\ddt}{\frac{\dx{}}{\dx{t}}}
\renewcommand{\div}{\operatorname{div}}
\DeclareMathOperator{\supp}{supp}
\DeclareMathOperator{\sign}{sign}
\DeclarePairedDelimiter{\prt}{(}{)}
\DeclarePairedDelimiter{\abs}{\lvert}{\rvert}
\DeclarePairedDelimiter{\norm}{\lVert}{\rVert}
\DeclarePairedDelimiter{\set}{\{}{\}}
\DeclarePairedDelimiter{\brk}{[}{]}
\newcommand{\2}{_{\gamma_{2}}}
\newcommand{\1}{_{\gamma_{1}}}
\newcommand{\g}{_{\gamma}}
\newcommand{\ga}{_{\gamma,\delta}}
\newcommand{\gmin}{\gamma_{\mathrm{min}}}
\newcommand{\gmax}{\gamma_{\mathrm{max}}}
\author{Tomasz D\k{e}biec}
\address{{\it Tomasz D\k{e}biec:} Faculty of Mathematics, Informatics and Mechanics, University of Warsaw, ul. Banacha 2, 02-097 Warsaw, Poland}
\email{t.debiec@mimuw.edu.pl}
\author{Piotr Gwiazda}
\address{{\it Piotr Gwiazda:} Institute of Mathematics, Polish Academy of Sciences, ul. \'Sniadeckich 8, 00-656 Warsaw, Poland}
\email{pgwiazda@mimuw.edu.pl}
\author{B\l{}a\.zej Miasojedow}
\address{{\it B\l{}a\.zej Miasojedow:} Faculty of Mathematics, Informatics and Mechanics, University of Warsaw, ul. Banacha 2, 02-097 Warsaw, Poland}
\email{b.miasojedow@mimuw.edu.pl}
\author{Zuzanna~Szyma\'nska}
\address{{\it Zuzanna Szyma\'nska:} Interdisciplinary Centre for Mathematical and Computational Modelling, University of Warsaw, ul. Tyniecka 15/17, 02-630 Warsaw, Poland}
\email{z.szymanska@icm.edu.pl}
\begin{document}
\title[Stability of solutions of porous medium equations]{Stability of solutions of the porous medium equation with growth with respect to the diffusion exponent}

\begin{abstract}
We consider a macroscopic model for the growth of living tissues incorporating pressure-driven dispersal and pressure-modulated proliferation. Assuming a power-law relation between the mechanical pressure and the cell density, the model can be expressed as the porous medium equation with a growth term. We prove H\"older continuous dependence of the solutions of the model on the diffusion exponent. The main difficulty lies in the degeneracy of the porous medium equations at vacuum. To deal with this issue, we first regularise the equation by shifting the initial data away from zero and then optimise the stability estimate derived in the regular setting.
\end{abstract}

\maketitle

\section{Introduction}
The use of mechanical fluid-based models for tissue growth has become a well-established approach in modelling tumour growth~\cite{PQV14,ByrneDrasdo,Friedman2007,Goriely}. 
In the simplest setting, the tumour is limited only by the availability of space and the dynamics of cell densities are driven by a self-reinforcing interplay between proliferation (which generates mechanical pressure) and dispersal. In this realm, we investigate the following equation for the evolution of the cell number density $u_\gamma$:
\begin{equation}
    \label{eq:PDEintro}
    \partial_t u_\gamma + \div\prt{u_\gamma v_\gamma} = u_\gamma G(p_\gamma),
\end{equation}
where $v_\gamma$ is the velocity, related to the pressure, $p_\gamma$, by Darcy's law $v_\gamma = -\nabla p_\gamma$. Closing the equation with a constitutive relation $p_\gamma = p_\gamma(u_\gamma)$, we end up with a general filtration equation for the movement of cells (the porous medium being the surrounding extracellular matrix). The function $G$ is a growth term incorporating the birth and death of cells. The subscript $\gamma$ reflects that in general the model depends on a vector of parameters; however, for our current purposes, it is a single parameter expressing the ''stiffness'' of the pressure in the state equation~\eqref{PressureLaw} below.

One of the main challenges associated with the use of mathematical models in practice is the lack of model validations against data and the enormous difficulty in their proper calibration. This applies, in particular, to tumour growth models which, despite these shortcomings, gave a clear step forward in our understanding of the disease dynamics. The next step requires developing mathematical methods enabling proper validation and calibration to create in the end a fully predictive model. Following this concept, we focus on probably the most important and well-established class of models describing cancer development which are models of cancer invasion. At a certain point in the development of a solid tumour, cells from the primary cancerous mass start to migrate and invade the tissue surrounding the tumour. This initial invasion of the local tissue is the first step in the complex process of secondary spread wherein the cancer cells migrate to other locations in the body and set up new tumours called metastases. These secondary tumours are responsible for above 90\% of all (human) deaths from cancer~\cite{Fares2020}. Although molecular mechanisms and cell-scale migration dynamics are roughly known, the invasion process as a whole is very complex and despite many efforts not fully understood \cite{Lowengrub2010}.

Even before finding the correct parameter values, the big challenge lies in the mentioned validation, that is, in obtaining adequate descriptions for the individual sub-processes that contribute to the whole biophysical process. For instance, within the invasion process, one can distinguish its basic components: the adhesion of tumour cells to the extracellular matrix (the extracellular matrix is the substance that fills the space between cells and binds cells into tissues and organs), the secretion of matrix-degrading enzymes by tumour cells, the migration of tumour cells, and finally their proliferation. Considering the complexity of the problem it seems reasonable to split it into smaller, still challenging, however feasible to achieve, sub-tasks. And so, our objective is to adequately describe the process of cancer cell proliferation, which is a critical step in the realistic modelling of cancer invasion. In this paper, we derive a stability estimate, which can later serve as a vital building block in deterministic or stochastic parameter estimation.

Let us point out that similar stability studies were previously undertaken in the framework of entropy solutions for a variety of hyperbolic conservation laws and quasilinear parabolic equations, notably with applications to traffic flow modelling and gas dynamics~\cite{BianchiniColombo, ChenCleo, ChenKarlsen}. While it is known that the solutions of the filtration equation $\partial_t u = \Delta\phi(u)$ depend continuously on the nonlinearity~\cite{BenilanCrandall}, to our knowledge, no explicit modulus of continuity is known. 

The paper is organised as follows. In the next section, we give the precise formulation of our model, state all necessary assumptions and formulate the main result. In Section~\ref{sec:A_priori}, we discuss the key properties of the solutions of the model. In Section~\ref{sec:Regularisation} we regularise the equation by shifting the initial data away from vacuum. We prove that solutions to such regularised problem stay strictly positive and provide necessary regularity estimates. In Section~\ref{sec:Stability_regular} we compute the $L^1$-stability estimate for the regularised solutions. Then, in Section~\ref{sec:MainProof} we use that estimate to conclude the proof of the main theorem. Finally, in Section~\ref{sec:Discussion}, we discuss some perspectives for further research.

\section{Formulation of the problem and main result}
\label{sec:Formulation}

\noindent We consider the initial value problem 
\begin{equation}
  \label{nonlinear_diffusion_0}
  \left\{
\begin{aligned}
\partial_t u_{\gamma}(t,x) &=  \div\prt*{u_{\gamma}(t,x) \nabla p_\gamma(t,x)} + u_{\gamma}(t,x) G(p_\gamma(t,x)),\quad x \in \R^d, \;\; t\in[0,T],\\
u_\gamma(0,x) &= u_{0,\gamma}(x) \geq 0,
\end{aligned}
    \right.
\end{equation}
where $u_\gamma$ represents the density of cells and $p_\gamma$ stands for the pressure within the tissue, obeying the following constitutive relation
\begin{equation}
\label{PressureLaw}
    p_\gamma = \frac{\gamma}{\gamma-1}u_\gamma^{\gamma-1}.
\end{equation}
Using this relation, equation~\eqref{nonlinear_diffusion_0} can be equivalently written as a nonlinear diffusion equation
\begin{equation}
  \label{nonlinear_diffusion}
  \left\{
\begin{aligned}
\partial_t u_{\gamma}(t,x) &=  \Delta\prt*{u_{\gamma}(t,x)}^{\gamma} + u_{\gamma}(t,x) G(p_\gamma(t,x)),\quad x \in \R^d, \;\; t\in[0,T],\\
u_\gamma(0,x) &= u_{0,\gamma}(x) \geq 0.
\end{aligned}
    \right.
\end{equation}

\noindent We assume that the growth function $G$ has the following properties:
\begin{equation}
\label{GrowthFunction}
    G\in C^1([0,\infty)),\quad G(0)>0,\quad G'<0,\quad G(p_H) = 0,
\end{equation}
for some \textit{homeostatic pressure} $p_H>0$, and we make the following assumptions on the initial data $u_{0,\gamma}$:\ there exists a compact set $K\subset\R^d$ such that
\begin{equation}
  \left\{  
\begin{aligned}
\label{InitialData}
    &u_{0,\gamma} \geq 0,\quad u_{0,\gamma} \in L^1(\R^d)\cap L^\infty(\R^d),\quad \supp u_{0,\gamma}\subset K\\
    &\Delta (u_{0,\gamma})^\gamma \in L^1(\R^d),\quad p_\gamma(u_{0,\gamma}) \leq p_H,
\end{aligned}
\right.
\end{equation}
uniformly in $\gamma>1$.
Under these assumptions, there exists a unique bounded weak solution $u\g, p\g\in L^\infty(0,T;L^1(\R^d)\cap L^\infty(\R^d))$.
Existence was shown in~\cite[Theorem~9.3]{Vazquez2007} (without the proliferation term; however, under assumption~\eqref{GrowthFunction}, it is not difficult to extend the result to equation~\eqref{nonlinear_diffusion}) and in~\cite{GPS2019}; uniqueness follows from Proposition~\ref{prop:ContractionPrinciple}.

Model~\eqref{nonlinear_diffusion_0} has attracted a lot of attention in recent years. In~\cite{PQV14, KP18} it was shown that in the incompressible limit $\gamma\to\infty$ one obtains a Hele-Shaw-type free boundary problem and the rate for this convergence was computed in~\cite{DDP}. It can also be shown that~\eqref{nonlinear_diffusion_0} arises as the inviscid limit of a viscoelastic tumour growth model~\cite{PerthameVauchelet, DDMS,elbar2023}. Systems incorporating multi-species dynamics are also of great the\-o\-re\-ti\-cal and practical interest, although they pose substantial additional mathematical difficulties, for instance, due to the formation of sharp interfaces between the species, see~\cite{GPS2019, PriceXu, DDMS, DebiecSchmidtchen2020, DebiecEtAl, Jacobs, CFSS17} for some recent developments in the field.

The main contribution of this paper is the formulation and proof of the following theorem on H\"older continuous dependence of the solution on the exponent $\gamma$.
\begin{theorem}\label{thm:main}
Let $1< \gmin < \gmax < \infty$ be given and set $\Gamma=[\gmin,\gmax]$. For $\gamma_1, \gamma_2 \in \Gamma$,
let $u_{\gamma_1}$ and $u_{\gamma_2}$ be two solutions to~\eqref{nonlinear_diffusion} with initial data $u_{0,\gamma_1}$ and $u_{0,\gamma_2}$ satisfying~\eqref{InitialData}. 
There exist constants $C_1$ that depends on the final time $T$ and $C_2$ that depends on $\Gamma$, the initial data, and $T$, such that
\begin{equation*}
\label{ModContinuity}
\sup_{t\in[0,T]\;}\| u_{\gamma_1}(t) - u_{\gamma_2}(t)\|_{L^1(\R^d)} \leq C_1\norm*{u_{0,\gamma_1}-u_{0,\gamma_2}}_{L^1(\R^d)} + C_2|\gamma_2-\gamma_1|^{\frac{1}{\min(\gamma_1,\gamma_2)+1}}.
\end{equation*}
\end{theorem}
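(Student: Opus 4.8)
The plan is to follow the road map sketched in the abstract: regularise, derive a clean stability estimate in the non-degenerate setting, and then optimise over the regularisation parameter. Concretely, for a small parameter $\delta>0$ I would replace the initial data $u_{0,\gamma}$ by $u_{0,\gamma}+\delta\mathbbm{1}_{K'}$ (on a slightly enlarged compact set $K'$, to keep $L^1\cap L^\infty$ bounds and the pressure bound $p_\gamma\le p_H$ under control up to an error vanishing with $\delta$), obtaining solutions $u\ga$ that, by the results announced in Section~\ref{sec:Regularisation}, stay bounded below by a positive constant on $[0,T]\times K'$ and enjoy enough regularity (in particular $\nabla (u\ga)^\gamma\in L^2$ and comparability of $u^{\gamma_1}$ and $u^{\gamma_2}$) to justify the computations. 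The triangle inequality then splits the quantity to bound into $\|u_{\gamma_1}-u_{\gamma_1,\delta}\|_{L^1}$, $\|u_{\gamma_1,\delta}-u_{\gamma_2,\delta}\|_{L^1}$ and $\|u_{\gamma_2,\delta}-u_{\gamma_2}\|_{L^1}$; the first and third are handled by the $L^1$-contraction/stability with respect to initial data (Proposition~\ref{prop:ContractionPrinciple}), giving a term of order $\delta$ (times $|K'|$) plus the genuine initial-data term $\|u_{0,\gamma_1}-u_{0,\gamma_2}\|_{L^1}$.

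The heart of the argument is the middle term, the stability of the regularised solutions in the diffusion exponent, which is exactly the content of Section~\ref{sec:Stability_regular}. I would write the equation for the difference $w=u_{\gamma_1,\delta}-u_{\gamma_2,\delta}$, test with a smooth approximation of $\sign(w)$ (Kruzhkov-type doubling or the standard $L^1$ technique), and track the mismatch coming from the two different nonlinearities: the diffusion terms produce, after integration by parts, a term controlled by $\|(u_{\gamma_1,\delta})^{\gamma_1}-(u_{\gamma_2,\delta})^{\gamma_2}\|$, which in turn is estimated by $\||u_{\gamma_1,\delta}-u_{\gamma_2,\delta}|\|$ plus a term measuring $\|s^{\gamma_1}-s^{\gamma_2}\|$ for $s$ in the (compact, bounded away from $0$ and $\infty$) range of the densities; the reaction terms $uG(p_\gamma)$ similarly split into a Lipschitz-in-$w$ contribution and a term of the form $|G(p_{\gamma_1})-G(p_{\gamma_2})|$ that again reduces to comparing $p_{\gamma_1}=\frac{\gamma_1}{\gamma_1-1}s^{\gamma_1-1}$ with $p_{\gamma_2}$. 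On the relevant range $s\in[\delta c, M]$ one has $|s^{\gamma_1}-s^{\gamma_2}|\le C(\delta)|\gamma_1-\gamma_2|$ with $C(\delta)$ blowing up like a negative power of $\delta$ as $\delta\to0$ (because $\partial_\gamma s^\gamma = s^\gamma\log s$ and $|\log s|\lesssim |\log\delta|$, while near $s\sim\delta$ the factor $s^{\gamma-1}$ degenerates); Grönwall's inequality in $t$ then yields $\|u_{\gamma_1,\delta}-u_{\gamma_2,\delta}\|_{L^1}\le e^{C_1T}\big(\|u_{0,\gamma_1,\delta}-u_{0,\gamma_2,\delta}\|_{L^1}+ C(\delta)\,T\,|\gamma_1-\gamma_2|\big)$.

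Putting the three pieces together gives a bound of the shape
\begin{equation*}
\sup_{t\in[0,T]}\|u_{\gamma_1}(t)-u_{\gamma_2}(t)\|_{L^1} \le C_1\|u_{0,\gamma_1}-u_{0,\gamma_2}\|_{L^1} + C\,\delta + C(\delta)\,|\gamma_1-\gamma_2|,
\end{equation*}
with $C(\delta)$ an explicit negative power of $\delta$ coming from the degeneracy; optimising the right-hand side over $\delta>0$ (differentiating $\delta\mapsto C\delta + C\delta^{-a}|\gamma_1-\gamma_2|$ and choosing $\delta\sim|\gamma_1-\gamma_2|^{1/(a+1)}$) produces the H\"older exponent, which should come out to be $\tfrac{1}{\min(\gamma_1,\gamma_2)+1}$ once the precise power $a$ of $\delta$ in $C(\delta)$ is identified — this is why the exponent in the statement depends on $\min(\gamma_1,\gamma_2)$. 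The main obstacle, and where care is needed, is getting the sharp $\delta$-dependence of the constant $C(\delta)$ in the regularised estimate: one must carefully separate the region where the density is genuinely of order $\delta$ (small in measure, so contributing a controlled $L^1$ error) from the bulk region, and keep track of how the lower bound on the regularised solution degenerates under the porous-medium flow, since a crude estimate would give a worse power of $\delta$ and hence a worse H\"older exponent. The remaining steps — verifying that the shifted data still satisfy~\eqref{InitialData} up to $O(\delta)$ errors, and that the regularity from Section~\ref{sec:Regularisation} is enough to legitimise the $\sign(w)$ testing — are routine.
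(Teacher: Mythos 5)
Your overall strategy --- shift the data by $\delta$, prove a stability estimate for the non-degenerate problem with a constant blowing up as a negative power of $\delta$, then optimise $\delta\sim|\gamma_1-\gamma_2|^{1/(a+1)}$ --- is exactly the paper's, and the outer layer (triangle inequality, $L^1$-contraction giving the $O(\delta)$ and initial-data terms, the final optimisation) matches. The gap is in the central step, the stability of the regularised solutions in $\gamma$. After testing the equation for $w=u_{\gamma_1,\delta}-u_{\gamma_2,\delta}$ with $\sign(w)$, the diffusion mismatch does \emph{not} reduce to an $L^1$ bound on $u_{\gamma_1,\delta}^{\gamma_1}-u_{\gamma_2,\delta}^{\gamma_2}$ as you suggest: the Laplacian survives, and the only way to exploit Kato's inequality is to split
\[
\Delta u_{\gamma_1,\delta}^{\gamma_1}-\Delta u_{\gamma_2,\delta}^{\gamma_2}
=\Delta\bigl(u_{\gamma_1,\delta}^{\gamma_2}-u_{\gamma_2,\delta}^{\gamma_2}\bigr)
+\Delta\bigl(u_{\gamma_1,\delta}^{\gamma_1}-u_{\gamma_1,\delta}^{\gamma_2}\bigr),
\]
where the first piece has the same sign as $w$ (so its contribution is nonpositive after Kato and integration over the torus), while the second must be estimated in the norm $\|\Delta(\cdot)\|_{L^1((0,T)\times\T^d)}$ --- not in $L^1$ of the function itself. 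This forces you to prove, uniformly in $\delta$ and $\gamma$, the a priori bounds $\partial_t u_{\gamma,\delta},\,\Delta u_{\gamma,\delta}^{\gamma}\in L^\infty(0,T;L^1)$ and $\nabla u_{\gamma,\delta}^{\gamma}\in L^2$, an ingredient your proposal waves at (``enough regularity to justify the computations'') but does not supply and which is genuinely needed.

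Second, your heuristic for the $\delta$-dependence points at the wrong source and would give the wrong power. The pointwise comparison $|s^{\gamma_1}-s^{\gamma_2}|\le C|\ln\delta|\,|\gamma_1-\gamma_2|$ on $[c\delta,M]$ blows up only logarithmically, so it cannot produce the negative power $\delta^{-a}$ that drives the optimisation. The actual source is the second-derivative computation: writing $u^{\gamma_2}=(u^{\gamma_1})^{\gamma_2/\gamma_1}$ (for $\gamma_2\ge\gamma_1$) and expanding,
\[
\Delta u^{\gamma_1}-\Delta u^{\gamma_2}
=\Bigl(1-\tfrac{\gamma_2}{\gamma_1}u^{\gamma_2-\gamma_1}\Bigr)\Delta u^{\gamma_1}
-\tfrac{\gamma_2(\gamma_2-\gamma_1)}{\gamma_1^2}\,u^{\gamma_2-2\gamma_1}\,\bigl|\nabla u^{\gamma_1}\bigr|^2,
\]
and it is the factor $u^{\gamma_2-2\gamma_1}\le C\delta^{-(2\gamma_1-\gamma_2)}$, paired with the $L^2$ gradient bound, that yields $a=2\min(\gamma_1,\gamma_2)-\max(\gamma_1,\gamma_2)$ and hence the exponent $\tfrac{1}{a+1}\ge\tfrac{1}{\min(\gamma_1,\gamma_2)+1}$. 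Without this identification the claimed H\"older exponent cannot be derived. A smaller but real issue is the reaction term: $|u_{\gamma_1,\delta}^{\gamma_2-1}-u_{\gamma_2,\delta}^{\gamma_2-1}|$ is not Lipschitz-controlled by $|w|$ uniformly in $\delta$ when $\gamma_2<2$; the paper disposes of this contribution using $G'<0$ together with the sign of $w$, a structural cancellation your sketch does not invoke.
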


\begin{remark}
    Although it does not satisfy conditions~\eqref{GrowthFunction}, the function $G\equiv 0$ is a viable choice for our analysis. Thus, Theorem~\ref{thm:main} holds for the classical porous medium equation.
\end{remark}

\section{Basic properties of the solution}
\label{sec:A_priori}
\noindent Let us first introduce some auxiliary notation. By $|u|_{-}$ and $\sign_{-}(u)$ we denote the negative part and negative sign of the function $u$, i.e.,
    \begin{equation*}
        |u|_{-} :=
        \begin{cases}
            -u, & \text{for $u<0$,}\\
            0, & \text{for $u\geq 0$,}
        \end{cases}
        \qquad \text{and}\qquad \sign_{-}(u):=
        \begin{cases}
            -1, & \text{for $u<0$,}\\
            0, & \text{for $u\geq 0$.}
        \end{cases}
    \end{equation*}
Analogously, we define the positive part and the positive sign of $u$
    \begin{equation*}
        |u|_{+} :=
        \begin{cases}
            u, & \text{for $u>0$,}\\
            0, & \text{for $u\leq 0$,}
        \end{cases}
        \qquad \text{and}\qquad \sign_{+}(u):=
        \begin{cases}
            1, & \text{for $u>0$,}\\
            0, & \text{for $u\leq 0$.}
        \end{cases}
    \end{equation*}
The following fundamental properties of equation~\eqref{nonlinear_diffusion_0} are well known, but we recall them together with proofs for the reader's convenience. The arguments we give below are formal in that we do not fully justify some computations. They can be made rigorous using well-known renormalisation techniques.
\begin{proposition}[Nonnegativity of the density]
    Let $u_\gamma$ be the solution to equation~\eqref{nonlinear_diffusion}. Then, $u_\gamma(t)\geq 0$ for all $t\geq 0$.
\end{proposition}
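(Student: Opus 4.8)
The plan is to prove nonnegativity by a standard entropy/Kato-type argument: test the equation against an approximation of $\sign_{-}(u_\gamma)$ and show that the negative part of $u_\gamma$ cannot grow from zero. More precisely, I would multiply equation~\eqref{nonlinear_diffusion} by $\sign_{-}(u_\gamma)$ — or, to be rigorous, by a smooth nondecreasing approximation $S_\varepsilon$ of the negative sign function with $S_\varepsilon \le 0$ and $S_\varepsilon' \ge 0$ — and integrate over $\R^d$. This yields a differential inequality for $\int_{\R^d} |u_\gamma(t)|_{-}\,\dx{x}$, and since the initial datum satisfies $u_{0,\gamma}\ge 0$ by~\eqref{InitialData}, the Grönwall lemma forces the negative part to remain zero for all $t\ge 0$.

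The key steps, in order, are as follows. First, I would observe that on the set $\{u_\gamma < 0\}$ the nonlinearity $u_\gamma^\gamma$ must be interpreted via the sign-preserving extension $u_\gamma|u_\gamma|^{\gamma-1}$ (this is the natural renormalised formulation), so that $\Delta(u_\gamma^\gamma)$ tested against $S_\varepsilon(u_\gamma)$ produces, after integration by parts, the term $-\int \nabla(u_\gamma|u_\gamma|^{\gamma-1})\cdot\nabla u_\gamma \, S_\varepsilon'(u_\gamma)\,\dx{x}$, which is $\le 0$ because $S_\varepsilon'\ge 0$ and $\nabla(u_\gamma|u_\gamma|^{\gamma-1})\cdot\nabla u_\gamma = \gamma|u_\gamma|^{\gamma-1}|\nabla u_\gamma|^2 \ge 0$. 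Second, the reaction term contributes $\int u_\gamma G(p_\gamma) S_\varepsilon(u_\gamma)\,\dx{x}$; since $G$ is bounded on the relevant pressure range and $u_\gamma S_\varepsilon(u_\gamma) \to |u_\gamma|_{-}$ as $\varepsilon\to 0$, this is controlled by $\|G\|_\infty \int |u_\gamma(t)|_{-}\,\dx{x}$. Third, passing to the limit $\varepsilon\to 0$ gives
\begin{equation*}
\ddt \int_{\R^d} |u_\gamma(t,x)|_{-}\,\dx{x} \le \|G\|_{\infty}\int_{\R^d} |u_\gamma(t,x)|_{-}\,\dx{x},
\end{equation*}
and Grönwall together with $\int |u_{0,\gamma}|_{-}\,\dx{x}=0$ yields $\int |u_\gamma(t)|_{-}\,\dx{x}=0$ for all $t$, hence $u_\gamma\ge 0$.

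The main obstacle is making this rigorous at the level of weak solutions: the equation is degenerate at vacuum, so $\Delta(u_\gamma^\gamma)$ need not be a function and the integration by parts above is formal. The paper explicitly flags this by saying the arguments are formal and can be made rigorous "using well-known renormalisation techniques," so for the purposes of this proof I would carry out the computation at the formal level as sketched, noting that it can be justified either by regularising the equation (as is done later in Section~\ref{sec:Regularisation}) and passing to the limit, or by the Kruzhkov-type doubling/renormalisation machinery standard for the porous medium equation. A secondary technical point is that all spatial integrals are finite because $u_\gamma(t)\in L^1(\R^d)$ for a.e.\ $t$ and the relevant quantities are supported where $u_\gamma$ is, so no boundary terms at infinity appear.
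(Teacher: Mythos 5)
Your proposal is correct and follows essentially the same route as the paper: both test the equation against (an approximation of) $\sign_{-}(u_\gamma)$, use the favourable sign of the resulting diffusion term (Kato's inequality), bound the reaction term by $\|G\|_\infty\int|u_\gamma|_{-}\,\dx{x}$, and conclude by Grönwall from $u_{0,\gamma}\ge 0$. Your additional remarks on the smooth approximation $S_\varepsilon$ and the sign-preserving extension of $u\mapsto u^\gamma$ simply make explicit the renormalisation step that the paper acknowledges but leaves formal.
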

\begin{proof}
    Multiplying equation~\eqref{nonlinear_diffusion_0} by $\sign_{-}(u\g)$, we have
    \begin{equation*}
        \partial_t |u\g|_{-} - \div\prt*{|u\g|_{-}\nabla p\g} \leq |u\g|_{-} G(p\g),
    \end{equation*}
    which, upon integrating in space, becomes
    \begin{equation*}
        \ddt\int_{\R^d} |u\g|_{-} \dx x \leq C\int_{\R^d} |u\g|_{-} \dx x.
    \end{equation*}
    Gronwall's lemma yields $|u\g|_{-}\equiv 0$.
\end{proof}
\begin{proposition}[Compact support property]
\label{prop:CompactSupport}
    There exists a bounded open set $\Omega\subset\R^d$, depending on $\sup_{\gamma}\norm*{u_{0,\gamma}}_{L^\infty(\R^d)}$, the size of the initial support $K$, and the final time $T$, such that
    \begin{equation*}
        \supp u_\gamma(t) \subset \Omega\quad\forall t\in[0,T].
    \end{equation*}
\end{proposition}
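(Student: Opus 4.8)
The plan is the classical finite-speed-of-propagation argument: I would exhibit an explicit, compactly supported supersolution of~\eqref{nonlinear_diffusion} dominating $u_\gamma$, and then take $\Omega$ to be a bounded open set containing its support at time $T$. As $K$ is compact, fix $x_0\in\R^d$ and $\rho_0>0$ with $K\subset B(x_0,\rho_0)$. It is convenient to pass to the pressure $p_\gamma=\tfrac{\gamma}{\gamma-1}u_\gamma^{\gamma-1}$, which formally solves
\[
\partial_t p_\gamma=(\gamma-1)\,p_\gamma\,\Delta p_\gamma+|\nabla p_\gamma|^2+(\gamma-1)\,p_\gamma\,G(p_\gamma),
\]
and to use the bound $G(p_\gamma)\le G(0)$ from~\eqref{GrowthFunction}.

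For the barrier I would take the Barenblatt-type profile
\[
\bar p(t,x):=\bigl(\phi(t)-c\,|x-x_0|^2\bigr)_+,\qquad \bar u(t,x):=\Bigl(\tfrac{\gamma-1}{\gamma}\,\bar p(t,x)\Bigr)^{\frac1{\gamma-1}},
\]
with constants $c:=\max\{1,G(0)/(2d)\}$, $\phi_0:=p_H+c\rho_0^2$ and $\phi(t):=\phi_0e^{4ct}$. On the set $\{\bar p>0\}$ one has $\Delta\bar p=-2cd$, so, writing $s:=|x-x_0|^2<\phi(t)/c$,
\[
(\gamma-1)\bar p\,\Delta\bar p+|\nabla\bar p|^2+(\gamma-1)G(0)\,\bar p=(\gamma-1)\bigl(G(0)-2cd\bigr)(\phi-cs)+4c^2s\;\le\;4c^2s\;<\;4c\phi\;=\;\partial_t\bar p,
\]
the first inequality because $c\ge G(0)/(2d)$; together with $G(p)\le G(0)$ this makes $\bar u$ a weak supersolution of~\eqref{nonlinear_diffusion} on $\R^d\times(0,T)$. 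Moreover $\bar u(0,\cdot)\ge u_{0,\gamma}$: outside $K$ we have $u_{0,\gamma}=0$, and for $x\in K$ the identity $p_\gamma\bigl(\bar u(0,x)\bigr)=\bar p(0,x)\ge\phi_0-c\rho_0^2=p_H\ge p_\gamma\bigl(u_{0,\gamma}(x)\bigr)$ (the last step by~\eqref{InitialData}) together with the monotonicity of $u\mapsto p_\gamma(u)$ gives $\bar u(0,x)\ge u_{0,\gamma}(x)$.

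By a comparison principle for~\eqref{nonlinear_diffusion}, in the spirit of Proposition~\ref{prop:ContractionPrinciple} and again understood in the renormalised sense, it follows that $0\le u_\gamma(t,\cdot)\le\bar u(t,\cdot)$ on $[0,T]$, hence $\supp u_\gamma(t)\subset\overline{B}\bigl(x_0,\sqrt{\phi(t)/c}\bigr)\subset\overline{B}\bigl(x_0,\sqrt{\phi_0/c}\,e^{2cT}\bigr)$ for every $t\in[0,T]$. Taking $\Omega:=B\bigl(x_0,1+\sqrt{\phi_0/c}\,e^{2cT}\bigr)$ completes the argument; this $\Omega$ depends only on $d$, $G(0)$, $p_H$, the radius $\rho_0$ of a ball containing $K$, and $T$ (and not on $\gamma$). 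I expect the only genuinely non-routine point to be constructing a supersolution despite the proliferation term $u_\gamma G(p_\gamma)$, which on its own spreads the support: this is why $G$ is replaced by the constant $G(0)$ and $\phi$ is allowed to grow exponentially, the rate $4c$ with $c\ge G(0)/(2d)$ being exactly what keeps the displayed inequality valid up to the advancing free boundary. The ensuing comparison between the weak solution $u_\gamma$ and the merely Hölder-continuous barrier $\bar u$ is standard for the porous medium equation and is treated here, as with the other statements of this section, at a formal level.
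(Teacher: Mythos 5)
Your proof is correct and follows essentially the same strategy as the paper: pass to the pressure equation, use the monotonicity of $G$ to replace $G(p_\gamma)$ by $G(0)$, and compare with an explicit compactly supported supersolution whose support bounds that of $u_\gamma$. The only real difference is the choice of barrier: the paper uses $P(t,x)=\bigl(C-|x|^2/(4(\tau+t))\bigr)_+$, which is a supersolution only on time intervals of length $d/(4G(0))$ and must be iterated to reach $T$, whereas your exponentially growing paraboloid covers all of $[0,T]$ in one step (and, incidentally, you write the correct coefficient $\gamma-1$ in the pressure equation where the paper has $\gamma$, though this is immaterial to the argument).
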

\begin{proof}
The following proof is taken from~\cite{PQV14}.
The compact support property for the densities is deduced from that of the pressures. Using~\eqref{PressureLaw} and~\eqref{nonlinear_diffusion}, we see that the pressure $p\g$ satisfies the equation
    \begin{equation*}
        \partial_t p\g = \gamma p\g\Delta p\g + \abs*{\nabla p\g}^2 + \gamma p\g G(p\g).
    \end{equation*}
Since $G$ is decreasing, it follows that $p\g$ is a subsolution of the equation
    \begin{equation}
    \label{eq:PressureSub}
        \partial_t p\g = \gamma p\g\Delta p\g + \abs*{\nabla p\g}^2 + \gamma p\g G(0).
    \end{equation}
Let us now set $\tau:=d/(4G(0))$ and consider the function  
\begin{equation*}
    P(t,x) := \abs*{C - \frac{|x|^2}{4(\tau+t)}}_{+},
\end{equation*}
where the constant $C$ is chosen large enough so that $P(0,x) \geq p\g(u_{0,\gamma}(x))$ for all $x\in\R^d$ and all $\gamma>1$. Such a choice is possible since the initial densities are assumed to be uniformly supported in the compact set $K$. 
A direct calculation shows that
\begin{equation*}
    \partial_t P - \gamma P\Delta P - \abs*{\nabla P}^2 - \gamma P G(0) = \gamma P\prt*{\frac{d}{2(\tau+t)}-G(0)}\geq 0,
\end{equation*}
so that $P$ is a supersolution of~\eqref{eq:PressureSub} on the time interval $\brk*{0,\frac{d}{4G(0)}}$. It follows that $\supp p\g\subset \supp P$, and consequently the support of $u\g$ is uniformly contained in the ball of radius $\sqrt{2Cd/G(0)}$ for all $t\in \brk*{0,\frac{d}{4G(0)}}$. Since $\norm*{p\g(\frac{d}{4G(0)})}_{L^\infty(\R^d)}\leq c$, the same argument can be reiterated until the final time $T$ is reached in a finite number of steps.
\end{proof}
Finally, we observe the following $L^1$-contraction property, which allows to compare the distance between $u_\gamma$ and the regularised density $u_{\gamma,\delta}$ introduced in the next section.
\begin{proposition}[$L^1$-contraction estimate]
\label{prop:ContractionPrinciple}
    For two solutions $u_\gamma^1$ and $u_\gamma^2$ of~\eqref{nonlinear_diffusion} with initial data $u_{0,\gamma}^1$ and $u_{0,\gamma}^2$, respectively, we have
    \begin{equation*}
        \norm*{u_\gamma^1(t) - u_\gamma^2(t)}_{L^1(\R^d)} \leq e^{G_{\mathrm{max}}t}\norm*{u_{0,\gamma}^1 - u_{0,\gamma}^2}_{L^1(\R^d)}\quad \forall t\in[0,T],
    \end{equation*}
    where $G_{\mathrm{max}}$ is the supremum of $|G(p)|$ over $0\leq p\leq \sup_{\gamma}\norm{p\g}_{L^\infty(\R^d)}$.
\end{proposition}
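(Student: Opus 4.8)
The plan is to carry out the classical Kato-type $L^1$-contraction argument, modified so as to absorb the reaction term. Since both solutions solve~\eqref{nonlinear_diffusion} with the \emph{same} exponent $\gamma$ — hence with the same nonlinearity $s\mapsto s^\gamma$ and the same pressure law~\eqref{PressureLaw} — I would set $w:=u_\gamma^1-u_\gamma^2$, write $p_\gamma^i:=\frac{\gamma}{\gamma-1}(u_\gamma^i)^{\gamma-1}$, and subtract the two equations to obtain
\begin{equation*}
\partial_t w = \Delta\prt*{(u_\gamma^1)^\gamma-(u_\gamma^2)^\gamma} + u_\gamma^1 G(p_\gamma^1) - u_\gamma^2 G(p_\gamma^2).
\end{equation*}
Multiplying (formally) by $\sign(w)$ and integrating over $\R^d$ — legitimate without boundary terms, since both solutions are uniformly compactly supported by Proposition~\ref{prop:CompactSupport} — turns the left-hand side into $\ddt\int_{\R^d}\abs*{w}\dx x$.

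For the diffusion term, the point is that $s\mapsto s^\gamma$ is nondecreasing on $[0,\infty)$, so $v:=(u_\gamma^1)^\gamma-(u_\gamma^2)^\gamma$ satisfies $\sign(v)=\sign(w)$ pointwise, and Kato's inequality gives
\begin{equation*}
\int_{\R^d}\sign(w)\,\Delta v\dx x=\int_{\R^d}\sign(v)\,\Delta v\dx x\le\int_{\R^d}\Delta\abs*{v}\dx x=0.
\end{equation*}
For the reaction term I would use the decomposition
\begin{equation*}
u_\gamma^1 G(p_\gamma^1)-u_\gamma^2 G(p_\gamma^2)=w\,G(p_\gamma^1)+u_\gamma^2\prt*{G(p_\gamma^1)-G(p_\gamma^2)}.
\end{equation*}
After multiplication by $\sign(w)$ the first summand becomes $\abs*{w}\,G(p_\gamma^1)\le G_{\mathrm{max}}\abs*{w}$, while the second is pointwise nonpositive: $u_\gamma^2\ge 0$, the map $u\mapsto p_\gamma(u)$ is increasing, so $\sign(p_\gamma^1-p_\gamma^2)=\sign(w)$, and since $G'<0$ we get $\sign(w)\prt*{G(p_\gamma^1)-G(p_\gamma^2)}\le 0$.

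Combining the three pieces yields the differential inequality
\begin{equation*}
\ddt\int_{\R^d}\abs*{w(t)}\dx x \le G_{\mathrm{max}}\int_{\R^d}\abs*{w(t)}\dx x,
\end{equation*}
and Gronwall's lemma gives $\norm*{w(t)}_{L^1(\R^d)}\le e^{G_{\mathrm{max}}t}\norm*{w(0)}_{L^1(\R^d)}$, which is the assertion. The sign bookkeeping above is routine; the genuine difficulty — already flagged before the proposition — is to make the first step rigorous, i.e.\ to justify multiplying a weak solution of a degenerate parabolic equation by $\sign(w)$ and integrating by parts in the diffusion term. I would handle this by the usual renormalisation technique (replace $\sign$ by a Lipschitz, nondecreasing approximation $\sign_\eta$, use $\sign_\eta'\ge 0$ together with the monotonicity of $s\mapsto s^\gamma$ to keep the diffusion contribution $\le 0$, and pass to the limit $\eta\to 0$), or equivalently by a doubling-of-variables argument in the spirit of Kruzhkov; the degeneracy of the equation at vacuum is precisely what makes this step nontrivial.
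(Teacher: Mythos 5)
Your proof is correct and follows essentially the same route as the paper's: the same splitting of the reaction term, multiplication by $\sign(u_\gamma^1-u_\gamma^2)$ combined with the identity $\sign(u_\gamma^1-u_\gamma^2)=\sign((u_\gamma^1)^\gamma-(u_\gamma^2)^\gamma)=\sign(p_\gamma^1-p_\gamma^2)$, Kato's inequality to discard the diffusion contribution, monotonicity of $G$ to discard the second reaction piece, and Gronwall. Your closing remarks on renormalisation match the paper's own caveat that the argument is formal and can be made rigorous by standard techniques.
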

\begin{proof}
    First, we write the equation for the difference of the two solutions:
    \begin{equation*}
        \partial_t\prt*{u\g^1-u\g^2} - \Delta\prt*{\prt*{u\g^1}^{\gamma}-\prt*{u\g^2}^{\gamma}} = \prt*{u\g^1-u\g^2}G(p\g^1) + u\g^2\prt*{G(p\g^1)-G(p\g^2)}.
    \end{equation*}
    Multiplying by $\sign(u\g^1-u\g^2) = \sign\prt*{\prt*{u\g^1}^{\gamma}-\prt*{u\g^2}^{\gamma}} = \sign(p\g^1-p\g^2)$, we obtain
    \begin{equation*}
        \partial_t\abs*{u\g^1-u\g^2} - \Delta\abs*{\prt*{u\g^1}^{\gamma}-\prt*{u\g^2}^{\gamma}} \leq \abs*{u\g^1-u\g^2}G(p\g^1) + u\g^2\prt*{G(p\g^1)-G(p\g^2)}\sign(p\g^1-p\g^2).
    \end{equation*}
    Notice that the last term is non-positive since $G$ is a decreasing function. Integrating in space gives
    \begin{equation*}
        \ddt\int_{\R^d} \abs*{u\g^1-u\g^2} \dx x \leq G_{\mathrm{max}}\int_{\R^d} \abs*{u\g^1-u\g^2} \dx x,
    \end{equation*}
    and the result follows by Gronwall's lemma.
\end{proof}

\section{Regularisation}\label{sec:Regularisation}
\noindent Since the densities we consider are not bounded away from the vacuum, equation~\eqref{nonlinear_diffusion} degenerates near $u_\gamma \approx 0$. This degeneracy causes substantial difficulties in establishing stability with respect to the diffusion exponent. For this reason, in the proof of the main result, we will consider solutions emanating from modified initial data. To preserve integrability, we first restrict the problem to the periodic case. By virtue of Proposition~\ref{prop:CompactSupport}, we can find $L = L(u_{0,\gamma},T)>0$ such that $\Omega\subset[-L,L]^d$. 
Then, in particular, $\supp u_{0,\gamma} \subset [-L,L]^d$ uniformly in $\gamma$. Therefore, we can extend the initial condition periodically outside of the $[-L,L]^d$ cube. Imposing periodic boundary conditions, we can then consider equation~\eqref{nonlinear_diffusion} posed on the flat torus $\T^d:=[-L,L]^d$. We then clearly have
\begin{equation*}
    \norm*{u\g(t)}_{L^1(\R^d)} = \norm*{u\g(t)}_{L^1(\T^d)}\quad\forall t\in[0,T].
\end{equation*}
Now, we shift the initial data to ensure strict positivity
\begin{equation*}
    \label{RegularisedData}
    u_{0,\gamma,\delta} := u_{0,\gamma} + \delta,\quad \delta >0,
\end{equation*}
and consider the regularised problem
\begin{equation}
\label{nonlinear_diffusion_regular}
    \left\{
    \begin{aligned}
        \partial_t u\ga - \Delta u\ga^{\gamma} &= u\ga G(p\ga),\\
        u\ga(0) &= u_{0,\gamma,\delta},
    \end{aligned}
    \right.
\end{equation}
posed in $[0,T]\times\T^d$\footnote{Alternatively, instead of considering a periodic extension, one could pose the regularised problem inside a ball containing the set $\Omega$ and impose homogeneous Neumann boundary condition.}. Here, $p\ga:=p_\gamma(u\ga)$.
Since $u_{0,\gamma,\delta} \geq \delta > 0$ is bounded away from $0$, \eqref{nonlinear_diffusion_regular} becomes strictly parabolic and admits a unique strictly positive strong solution.
The parameter $\delta$ will in the end be chosen in terms of the two exponents $\gamma_1$ and $\gamma_2$, but it will always be bounded by a positive constant depending only on the size of the set $\Gamma$. Therefore, there exists a positive constant $P_H>p_H$ such that
\begin{equation*}
    p_{0,\gamma,\delta} = p_\gamma(u_{0,\gamma,\delta}) \leq P_H.
\end{equation*}
Let us point out that, as $\delta\to0$, $u\ga$ converges to the solution $u_\gamma$ of~\eqref{nonlinear_diffusion} with initial data $u_{0,\gamma}$ in the weak-star topology on $L^\infty((0,T)\times\R^d)$, see~\cite[Theorem~~2]{GPS2019}.

We now prove that indeed the lower bound is preserved.
\begin{proposition}[$L^\infty$ bounds and strict positivity of the regularised density]
    Let $u\ga$ be the solution of~\eqref{nonlinear_diffusion_regular}. Then
\begin{equation*}
    u\ga \leq u_H:=\prt*{\frac{\gamma-1}{\gamma}P_H}^{\frac{1}{\gamma-1}},\quad p\ga \leq P_H,
\end{equation*}
and
\begin{equation*}
    u\ga(t) \geq \delta e^{-G_Mt} > 0\quad \forall t\in (0,T],
\end{equation*}
where $G_M:=\max_{0\leq p\leq P_H}|G(p)|$.
\end{proposition}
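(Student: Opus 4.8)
The plan is to obtain all three bounds by comparison with explicit barriers, using the same formal energy estimates as in the preceding propositions; this is legitimate because~\eqref{nonlinear_diffusion_regular} is uniformly parabolic on $[0,T]\times\T^d$, and the sign-function manipulations can be justified by renormalisation exactly as remarked above.

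For the upper bounds I would first observe that the constant $u_H=\prt*{\frac{\gamma-1}{\gamma}P_H}^{\frac{1}{\gamma-1}}$ has pressure $p_\gamma(u_H)=P_H$, and that $G(P_H)<0$ since $P_H>p_H$ and $G$ is strictly decreasing with $G(p_H)=0$. Hence $\partial_t u_H-\Delta u_H^{\gamma}-u_H G(p_\gamma(u_H))=-u_H G(P_H)\geq 0$, so $u_H$ is a supersolution of~\eqref{nonlinear_diffusion_regular}. Subtracting the corresponding inequality from the equation for $u\ga$, multiplying by $\sign_{+}(u\ga-u_H)=\sign_{+}(p\ga-P_H)$, and splitting the reaction term as
\begin{equation*}
u\ga G(p\ga)-u_H G(P_H)=(u\ga-u_H)G(p\ga)+u_H\bigl(G(p\ga)-G(P_H)\bigr),
\end{equation*}
one checks that both summands are nonpositive on $\set*{u\ga>u_H}$ by monotonicity of $G$. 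Integrating over $\T^d$, where the diffusion term (after a Kato-type inequality) integrates to zero, gives $\ddt\int_{\T^d}|u\ga-u_H|_{+}\dx x\leq 0$. Since $p_\gamma(u_{0,\gamma,\delta})\leq P_H$ yields $u_{0,\gamma,\delta}\leq u_H$, this quantity starts at zero, so $u\ga\leq u_H$ and consequently $p\ga=p_\gamma(u\ga)\leq p_\gamma(u_H)=P_H$ for all $t$.

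For the lower bound I would take $\underline u(t):=\delta e^{-G_M t}$ as a subsolution. Indeed $\underline u(t)\leq\delta\leq u_{0,\gamma,\delta}$, so $p_\gamma(\underline u(t))\leq P_H$ and hence $G_M+G(p_\gamma(\underline u(t)))\geq 0$, which gives $\partial_t\underline u-\Delta\underline u^{\gamma}-\underline u G(p_\gamma(\underline u))=-\delta e^{-G_M t}\bigl(G_M+G(p_\gamma(\underline u))\bigr)\leq 0$. Repeating the previous argument with $\sign_{+}(\underline u-u\ga)$ and splitting
\begin{equation*}
\underline u G(p_\gamma(\underline u))-u\ga G(p\ga)=(\underline u-u\ga)G(p_\gamma(\underline u))+u\ga\bigl(G(p_\gamma(\underline u))-G(p\ga)\bigr),
\end{equation*}
where the second term is nonpositive on $\set*{\underline u>u\ga}$ and the first is bounded by $G_M|\underline u-u\ga|_{+}$, I obtain $\ddt\int_{\T^d}|\underline u-u\ga|_{+}\dx x\leq G_M\int_{\T^d}|\underline u-u\ga|_{+}\dx x$. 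Since $|{-u_{0,\gamma}}|_{+}=|\underline u(0)-u_{0,\gamma,\delta}|_{+}=0$, Gronwall's lemma forces $|\underline u-u\ga|_{+}\equiv 0$, i.e.\ $u\ga(t)\geq\delta e^{-G_M t}>0$.

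I expect no serious analytical obstacle beyond choosing the barriers correctly: the crucial observations are that a \emph{constant} density at the super-homeostatic pressure level $P_H$ is a supersolution precisely because $G(P_H)<0$, and that $\delta e^{-G_M t}$ dominates the worst-case reaction from below since $|G|\leq G_M$ on $[0,P_H]$. The only delicate point is the formal use of a Kato-type inequality for the (degenerate) diffusion term when testing against the sign functions, which — just as for the analogous computations recalled earlier — is made rigorous by the standard renormalisation procedure.
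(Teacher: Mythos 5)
Your proof is correct and follows essentially the same route as the paper: comparison with the constant barrier $u_H$ (whose pressure is $P_H$ with $G(P_H)\leq 0$) for the upper bound, and with the spatially constant subsolution $\delta e^{-G_M t}$ for the lower bound, in both cases testing against $\sign_{+}$, using Kato's inequality and Gronwall. The only cosmetic difference is that you phrase the argument in super/subsolution language and split the reaction term with $G$ evaluated at the barrier's pressure rather than at $p\ga$, which changes nothing of substance.
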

\begin{proof}
Since $u_H$ is a constant, we can write
\begin{align*}
    \partial_t\prt*{u\ga - u_H} &- \Delta\prt*{u\ga^\gamma - u_H^\gamma} \\&= \prt*{u\ga-u_H}G(p\ga) + u_H G(p\ga)\\
    &= \prt*{u\ga-u_H}G(p\ga) + u_H\prt*{G(p\ga)-G(P_H)} + u_HG(P_H).
\end{align*}
Multiplying this equation by $\sign_{+}(u\ga-u_H)$, we obtain 
\begin{align*}
    \partial_t\abs*{u\ga - u_H}_{+} - \Delta\abs*{u\ga^\gamma - u_H^\gamma}_{+} &\leq \abs*{u\ga-u_H}_{+}G(p\ga)\\
    &\;+ u_H\prt*{G(p\ga)-G(P_H)}\sign_{+}(u\ga-u_H)\\
    &\;+ u_HG(P_H)\sign_{+}(u\ga-u_H).
\end{align*}
The last two terms are nonpositive, since $\sign_{+}(u\ga-u_H) = \sign_{+}(p\ga-P_H)$ and $G$ is decreasing and since $G(P_H)\leq 0$, respectively. Integrating in space, we thus obtain
\begin{equation*}
    \ddt \int_{\T^d}\abs*{u\ga - u_H}_{+}\dx x \leq C\int_{\T^d}\abs*{u\ga - u_H}_{+}\dx x.
\end{equation*}
Since, by assumption, $u\ga(0)\leq u_H$, it follows that $u\ga \leq u_H$ and $p\ga \leq p_H$ almost everywhere in $(0,T)\times\T^d$.

To prove the lower bound, we define $\underline{u}(t):=\delta e^{-G_Mt}$. It then satsfies $\underline{u}' = -G_M \underline{u}$ and we can write
\begin{align*}
    \partial_t\prt*{\underline{u} - u\ga} - \Delta\prt*{\underline{u}^\gamma - u\ga^\gamma} &= -G_M\underline{u} - u\ga G(p\ga)\\
    &=G(p\ga)\prt*{\underline{u}-u\ga} + \underline{u}\prt*{-G_M-G(p\ga)}.
\end{align*}
Notice that the last term is negative. Upon multiplication by $\sign_{+}\prt*{\underline{u}-u\ga}$ and integration in space, we obtain
\begin{equation*}
    \ddt\int_{\T^d}\abs*{\underline{u}-u\ga}_{+}\dx x \leq G_M\int_{\T^d} \abs*{\underline{u}-u\ga}_{+} \dx x. 
\end{equation*}
At $t=0$, we have $\underline{u}(0) = \delta\leq u_{0,\gamma,\delta}$. Therefore, by Gronwall's lemma, we deduce $\underline{u}(t)\leq u\ga$ for all $t\in[0,T]$.
\end{proof}

In the interest of readability, we henceforth omit the subscript $\delta$. It will be reinstated later, in the conclusion of the proof of Theorem~\ref{thm:main}.
\begin{lemma}
    The following regularity estimates hold uniformly in $\gamma$ and $\delta$:
    \begin{align*}
        \partial_t u_\gamma &\in L^\infty(0,T;L^1(\T^d)),\\
        \Delta u_\gamma^\gamma &\in L^\infty(0,T;L^1(\T^d)),\\
        \nabla u_\gamma^\gamma &\in L^2((0,T)\times\T^d).
    \end{align*}
\end{lemma}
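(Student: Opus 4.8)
The plan is to derive the three estimates in order, each by a standard energy/parabolic-regularity argument, using as the key structural input that $u_\gamma$ is a strictly positive strong solution of the strictly parabolic problem~\eqref{nonlinear_diffusion_regular}, together with the uniform bounds $\delta e^{-G_Mt}\le u_\gamma\le u_H$ and $p_\gamma\le P_H$ established in the previous proposition. Throughout, the crucial point is \emph{uniformity} in $\gamma$ and $\delta$: all constants must be traced back to $\gmin,\gmax$, the uniform $L^1\cap L^\infty$ bounds on the initial data, and $T$, and must not blow up as $\delta\to0$ or as $\gamma$ approaches an endpoint of $\Gamma$.

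First I would treat $\partial_t u_\gamma\in L^\infty(0,T;L^1(\T^d))$. Differentiate the equation in time and set $w=\partial_t u_\gamma$; then $\partial_t w - \Delta(\gamma u_\gamma^{\gamma-1}w) = G(p_\gamma)w + u_\gamma G'(p_\gamma)\partial_t p_\gamma$, and since $p_\gamma=\tfrac{\gamma}{\gamma-1}u_\gamma^{\gamma-1}$ one has $\partial_t p_\gamma = \gamma u_\gamma^{\gamma-2}w$, so the forcing is $\big(G(p_\gamma)+\gamma u_\gamma^{\gamma-1}G'(p_\gamma)\big)w$. Multiplying by $\sign(w)$, integrating over $\T^d$, and using $\Delta(\cdots)\cdot\sign(w)$ integrating to something $\le 0$ (the degenerate/parabolic analogue of the contraction argument already used in Proposition~\ref{prop:ContractionPrinciple}), Gronwall gives $\|\partial_t u_\gamma(t)\|_{L^1}\le e^{Ct}\|\partial_t u_\gamma(0)\|_{L^1}$ with $C=C(\gmin,\gmax,P_H)$. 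The initial value $\partial_t u_\gamma(0) = \Delta u_{0,\gamma}^\gamma + u_{0,\gamma}G(p_{0,\gamma})$ is bounded in $L^1$ uniformly by assumption~\eqref{InitialData} (note $\Delta(u_{0,\gamma}+\delta)^\gamma=\Delta u_{0,\gamma}^\gamma$, so the shift does not spoil this). This simultaneously yields $\Delta u_\gamma^\gamma = \partial_t u_\gamma - u_\gamma G(p_\gamma)\in L^\infty(0,T;L^1(\T^d))$, uniformly, since the zeroth-order term is controlled by $u_H$ and $G_M$.

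For $\nabla u_\gamma^\gamma\in L^2((0,T)\times\T^d)$ I would test the equation with $u_\gamma^{\gamma}$ (or, more transparently, with $p_\gamma$): multiplying $\partial_t u_\gamma - \Delta u_\gamma^\gamma = u_\gamma G(p_\gamma)$ by $u_\gamma^\gamma$ and integrating over $\T^d$ gives $\tfrac{1}{\gamma+1}\ddt\int u_\gamma^{\gamma+1} + \int|\nabla u_\gamma^\gamma|^2 = \int u_\gamma^{\gamma+1}G(p_\gamma)$. Integrating in time over $[0,T]$ and bounding the right-hand side by $G_M\,u_H^{\gamma+1}T\,|\T^d|$ and the initial term by $u_H^{\gamma+1}|\T^d|$, we obtain $\|\nabla u_\gamma^\gamma\|_{L^2((0,T)\times\T^d)}^2 \le C$ with $C$ depending only on $u_H$ (hence on $P_H$, $\gmax$), $|\T^d|$ (hence on $L=L(u_{0,\gamma},T)$), $G_M$, and $T$ — all uniform in $\gamma\in\Gamma$ and $\delta$.

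The main obstacle is the rigorous justification of the first step: differentiating the equation in time and manipulating $\partial_t u_\gamma$ as an $L^1$ function requires either enough a priori smoothness of the strong solution (which holds since the problem is uniformly parabolic for fixed $\delta$, because $u_\gamma\ge\delta e^{-G_Mt}>0$ keeps the diffusion coefficient $\gamma u_\gamma^{\gamma-1}$ bounded below) or a regularisation/difference-quotient argument in time followed by passage to the limit; and in either route one must check that the resulting constants do not degenerate as $\delta\to0$. The point is that the \emph{lower} bound $u_\gamma\ge\delta e^{-G_Mt}$ is used only qualitatively — to license the computation — while the quantitative bounds come entirely from the $\delta$-independent upper bounds, so the estimates survive the limit. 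As elsewhere in this section, I would present the computation formally and remark that it is made rigorous by standard renormalisation/parabolic-regularisation techniques.
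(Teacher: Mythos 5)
Your argument for the first two bounds is essentially the paper's: differentiate the equation in time, multiply by $\sign(\partial_t u_\gamma)$, use that the Laplacian term integrates to nothing harmful on the torus (Kato plus periodicity), and close with Gronwall; the Laplacian bound is then read off from the equation. One cosmetic difference: the paper notes that the term $u_\gamma G'(p_\gamma)\,\partial_t p_\gamma\cdot\sign(\partial_t u_\gamma)=u_\gamma G'(p_\gamma)\abs{\partial_t p_\gamma}\le 0$ and simply drops it, whereas you absorb it into the Gronwall constant via the uniform bound on $\gamma u_\gamma^{\gamma-1}G'(p_\gamma)$; both work. For the gradient bound you take a genuinely different (and equally standard) route: a direct energy estimate obtained by testing with $u_\gamma^\gamma$, which has the merit of being independent of the first two estimates, while the paper integrates by parts against the already-established $L^1$ bound on $\Delta u_\gamma^\gamma$ and the $L^\infty$ bound on $u_\gamma^\gamma$.

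One concrete error: your parenthetical claim that $\Delta(u_{0,\gamma}+\delta)^\gamma=\Delta u_{0,\gamma}^\gamma$ is false. Expanding, $\Delta(u_{0,\gamma}+\delta)^\gamma=\gamma(u_{0,\gamma}+\delta)^{\gamma-1}\Delta u_{0,\gamma}+\gamma(\gamma-1)(u_{0,\gamma}+\delta)^{\gamma-2}\abs{\nabla u_{0,\gamma}}^2$, which differs from $\Delta u_{0,\gamma}^\gamma$ unless $\delta=0$; the shift commutes with $\Delta$ but not with the power. You were right to worry about the uniformity in $\delta$ of $\norm{\partial_t u_{\gamma,\delta}(0)}_{L^1}$ --- a point the paper itself passes over in silence --- but this identity is not the fix; one needs either a separate uniform-in-$\delta$ hypothesis on $\Delta u_{0,\gamma,\delta}^\gamma$ or an additional approximation of the initial data. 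The rest of the argument, including the tracking of constants through $u_H$, $P_H$, $G_M$, $\abs{\T^d}$ and $\Gamma$, is sound.
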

\begin{proof}
    We first establish the bound on the time derivative, the other two then follow easily. To this end, we differentiate equation~\eqref{nonlinear_diffusion_regular} to obtain
    \begin{align*}
        \partial_t\prt*{\partial_t u\g} - \gamma\Delta\prt*{u\g^{\gamma-1}\partial_t u\g} = \partial_t u\g G(p\g) + u\g G'(p\g) \partial_t p\g.
    \end{align*}
    Multiplying by $\sign(\partial_t u\g)$ and observing that $\sign(\partial_t u\g)=\sign(\partial_t p\g)$, we have
    \begin{align*}
        \partial_t\abs*{\partial_t u\g} -  \gamma\Delta\prt*{u\g^{\gamma-1}\abs*{\partial_t u\g}} \leq \abs*{\partial_t u\g} G(p\g) + u\g G'(p\g) \abs*{\partial_t p\g},
    \end{align*}
    where the last term is negative. Integrating in space yields
    \begin{equation*}
        \ddt \int_{\T^d}\abs*{\partial_t u\g}\dx x \leq C\int_{\T^d}\abs*{\partial_t u\g}\dx x,
    \end{equation*}
    and we conclude that $\partial_t u_\gamma \in L^\infty(0,T;L^1(\R^d))$ by Gronwall's lemma. The corresponding bound on the Laplacian follows from equation~\eqref{nonlinear_diffusion_regular}. Finally, integrating by parts, we have
    \begin{equation*}
        \int_0^T\!\!\int_{\T^d} \abs*{\nabla u\g^\gamma}^2\dx x\dx t = -\int_0^T\!\!\int_{\T^d} u\g^\gamma\Delta u\g^\gamma\dx x\dx t,
    \end{equation*}
    and the $L^2$ bound for the gradient follows.
\end{proof}

\section{Stability estimate for regularised densities.}
\label{sec:Stability_regular}
From now on, we suppose that two exponents $\gamma_1, \gamma_2\in \Gamma$ are chosen. Moreover, since $s\geq 1 \Rightarrow s^a\geq 1,\;\forall a>0$, we assume that $\abs*{\gamma_2-\gamma_1}<1$.
We begin with a preliminary calculation.
\begin{lemma}
\label{lem:LapEstimate}
    Let $u\1$ be the solution of~\eqref{nonlinear_diffusion_regular} with initial data $u_{0,\gamma_1}$. Then,
    \begin{equation*}
        \label{LapEstimate}
        \norm*{\Delta u\1^{\gamma_1}-\Delta u\1^{\gamma_2}}_{L^1((0,T)\times\T^d)} \leq C(\Gamma, T)\prt*{1+|\ln\delta|+\delta^{-a}}\abs*{\gamma_2-\gamma_1}
    \end{equation*}
    with 
    \begin{equation*}
    a := 2\min\prt*{\gamma_1,\gamma_2}-\max\prt*{\gamma_1,\gamma_2}.
    \end{equation*}
\end{lemma}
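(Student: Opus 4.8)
The goal is to estimate the $L^1$-norm in space-time of $\Delta u\1^{\gamma_1} - \Delta u\1^{\gamma_2}$, where $u\1$ solves the regularised equation with exponent $\gamma_1$. The key point is that $u\1$ is a fixed, strictly positive, bounded strong solution — so $u\1$ ranges over a compact interval $[\delta e^{-G_M T}, u_H]$ bounded away from zero. The whole lemma is therefore really a pointwise estimate on the map $s \mapsto s^{\gamma_1} - s^{\gamma_2}$ and its second derivatives, integrated against the (uniformly bounded, by the previous lemma) quantities $|\Delta u\1^{\gamma_1}|$, $|\nabla u\1|^2$, etc.

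First I would write out the chain rule: for a smooth positive $u$,
\[
\Delta u^{\gamma_i} = \gamma_i u^{\gamma_i - 1}\Delta u + \gamma_i(\gamma_i-1)u^{\gamma_i-2}|\nabla u|^2.
\]
Subtracting the two expressions ($i=1,2$) and regrouping, $\Delta u\1^{\gamma_1} - \Delta u\1^{\gamma_2}$ is a sum of terms each carrying a factor of the form $\gamma_1 u^{\gamma_1-1} - \gamma_2 u^{\gamma_2-1}$ or $\gamma_1(\gamma_1-1)u^{\gamma_1-2} - \gamma_2(\gamma_2-1)u^{\gamma_2-2}$, multiplied by $\Delta u\1$ or by $|\nabla u\1|^2$. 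For each such difference I would use the mean value theorem in the exponent: for fixed $s>0$, the function $\gamma\mapsto \gamma s^{\gamma-1}$ has derivative $s^{\gamma-1}(1+\gamma\ln s)$, so
\[
\bigl|\gamma_1 s^{\gamma_1-1} - \gamma_2 s^{\gamma_2-1}\bigr| \leq |\gamma_2-\gamma_1|\cdot s^{\theta-1}\bigl(1 + \gmax|\ln s|\bigr)
\]
for some intermediate $\theta$, and similarly for the second-derivative coefficient, which produces a factor $s^{\theta-2}$. Then I bound $s \in [\delta e^{-G_M T}, u_H]$: the upper part $u_H$ contributes a constant $C(\Gamma,T)$, while the negative powers blow up only at the lower end, giving $s^{\theta-2} \leq C\,\delta^{-(2-\theta)} \leq C\,\delta^{-a}$ with $a = 2\min(\gamma_1,\gamma_2)-\max(\gamma_1,\gamma_2)$ (note $2-\theta \leq 2 - (\min\gamma_i - 1) = 3-\min\gamma_i$; one has to check the precise exponent bookkeeping against the $u^{\gamma_i-2}$ terms and the $|\nabla u|^2$ weight so that $a$ comes out exactly as stated), and the logarithm contributes the $|\ln\delta|$ term (bounded near $u_H$ by a constant). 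This yields a pointwise bound
\[
\bigl|\Delta u\1^{\gamma_1} - \Delta u\1^{\gamma_2}\bigr| \leq C(\Gamma,T)\,|\gamma_2-\gamma_1|\,\bigl(1+|\ln\delta|+\delta^{-a}\bigr)\bigl(|\Delta u\1| + |\nabla u\1|^2\bigr),
\]
pointwise in $(0,T)\times\T^d$ — except that the weights on the right are $|\Delta u\1^{\gamma_1}|$ and $|\nabla u\1^{\gamma_1}|$ rather than the bare $u\1$ derivatives.

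Finally I would integrate over $(0,T)\times\T^d$ and invoke the previous lemma, which gives $\Delta u\1^{\gamma_1} \in L^\infty(0,T;L^1(\T^d))$ and $\nabla u\1^{\gamma_1} \in L^2((0,T)\times\T^d)$, both uniformly in $\gamma$ and $\delta$; dividing by appropriate powers of $u\1$ (bounded below by $\delta e^{-G_M T}$, bounded above by $u_H$) converts these into the needed $L^1$ and $L^2$ bounds on $\Delta u\1$ and $|\nabla u\1|^2$, at the cost of absorbing further factors of $\delta^{-1}$ — which is precisely why the exponent $a$ rather than something smaller appears, and why the bookkeeping is the delicate part. The main obstacle is exactly this power-counting: tracking how many negative powers of $u\1$ (hence of $\delta$) are generated by (i) the difference of monomial coefficients, (ii) the conversion between $\nabla u\1^{\gamma}$ and $\nabla u\1$, and (iii) the conversion between $\Delta u\1^{\gamma}$ and $\Delta u\1$, and verifying they combine to give exactly $\delta^{-a}$ with $a = 2\min(\gamma_1,\gamma_2)-\max(\gamma_1,\gamma_2)$ and no worse. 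Everything else is the mean value theorem plus the uniform a priori bounds already in hand.
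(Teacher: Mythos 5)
Your skeleton (chain-rule expansion, mean value theorem in the exponent, then the a priori bounds on $\Delta u\1^{\gamma_1}$ and $\nabla u\1^{\gamma_1}$) is reasonable, but the bookkeeping you defer is precisely where the argument, as described, fails to produce the stated exponent $a=2\min(\gamma_1,\gamma_2)-\max(\gamma_1,\gamma_2)$. Take $\gamma_2\ge\gamma_1$. In your decomposition the dangerous piece is
\begin{equation*}
\brk*{\gamma_1(\gamma_1-1)u\1^{\gamma_1-2}-\gamma_2(\gamma_2-1)u\1^{\gamma_2-2}}\abs*{\nabla u\1}^2 .
\end{equation*}
The mean value theorem in $\gamma$ bounds the bracket by $C\abs*{\gamma_2-\gamma_1}\prt*{1+\abs*{\ln u\1}}u\1^{\theta-2}$ with an uncontrolled $\theta\in[\gamma_1,\gamma_2]$, and the worst case $\theta=\gamma_1$ is genuinely attained in order of magnitude as $u\1\to0$ (for small $s$ the difference $\gamma_1(\gamma_1-1)s^{\gamma_1-2}-\gamma_2(\gamma_2-1)s^{\gamma_2-2}$ is of size $s^{\gamma_1-2}$). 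Converting the weight via $\abs*{\nabla u\1}^2=\gamma_1^{-2}u\1^{2-2\gamma_1}\abs*{\nabla u\1^{\gamma_1}}^2$ then yields $u\1^{\gamma_1-2\gamma_1}=u\1^{-\gamma_1}$, i.e.\ a factor $\delta^{-\gamma_1}$ (times a logarithm); since $\gamma_1>2\gamma_1-\gamma_2=a$ whenever $\gamma_2>\gamma_1$, bounding each factor separately in sup norm proves the lemma only with $\delta^{-\min(\gamma_1,\gamma_2)}\abs*{\ln\delta}$ in place of $\delta^{-a}$. (The same loss occurs in your conversion of $\Delta u\1^{\gamma_1}\in L^1$ into a bound on $\Delta u\1$, which is not a mere division by a power of $u\1$ because of the cross term $u\1^{\gamma_1-2}\abs*{\nabla u\1}^2$.)

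The missing idea is an exact algebraic regrouping rather than an MVT estimate on the raw coefficients: write $u\1^{\gamma_2}=\prt*{u\1^{\gamma_1}}^{\gamma_2/\gamma_1}$ and apply the chain rule with respect to $w=u\1^{\gamma_1}$. This gives the identity
\begin{equation*}
\Delta u\1^{\gamma_1}-\Delta u\1^{\gamma_2}=\prt*{1-\tfrac{\gamma_2}{\gamma_1}u\1^{\gamma_2-\gamma_1}}\Delta u\1^{\gamma_1}-\tfrac{\gamma_2(\gamma_2-\gamma_1)}{\gamma_1^2}\,u\1^{\gamma_2-2\gamma_1}\abs*{\nabla u\1^{\gamma_1}}^2,
\end{equation*}
in which the coefficient of $\abs*{\nabla u\1^{\gamma_1}}^2$ carries the factor $\gamma_2-\gamma_1$ for free (from $\beta(\beta-1)$ with $\beta=\gamma_2/\gamma_1$) together with the exact power $u\1^{\gamma_2-2\gamma_1}=u\1^{-a}$ --- no intermediate exponent, no logarithm --- while the logarithm appears only in the first coefficient, which involves no negative power of $u\1$. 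This grouping is what makes the precise value of $a$ come out. For what it is worth, your weaker bound would, after the optimisation in Section~\ref{sec:MainProof}, still recover the exponent $\frac{1}{\min(\gamma_1,\gamma_2)+1}$ of Theorem~\ref{thm:main} up to a logarithmic correction, but it does not prove the lemma as stated.
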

\begin{proof}
Without loss of generality, we assume that $\gamma_2\geq\gamma_1$. Let us observe the following pointwise identities.
\begin{equation}
\label{eq:L_estimate}    
\begin{aligned}
    \Delta u\1^{\gamma_1} - \Delta u\1^{\gamma_2} &= \Delta u\1^{\gamma_1} - \Delta\prt*{ u\1^{\gamma_1}}^{\frac{\gamma_2}{\gamma_1}}\\
    &= \Delta u\1^{\gamma_1}\prt*{1 - \frac{\gamma_2}{\gamma_1}u\1^{\gamma_2-\gamma_1}} - \frac{\gamma_2}{\gamma_1}\frac{\gamma_2-\gamma_1}{\gamma_1}u\1^{\gamma_1\prt*{\frac{\gamma_2}{\gamma_1}-2}}\abs*{\nabla u\1^{\gamma_1}}^2\\
    &\equiv A\Delta u\1^{\gamma_1} - B\abs*{\nabla u\1^{\gamma_1}}^2.
\end{aligned}
\end{equation}
It remains to estimate the $L^\infty$ norm of factors $A$ and $B$. To this end, let us recall that the regularised densities are bounded from below, $u\g\geq \delta e^{-G_MT}>0$.
Since the exponential function is locally Lipschitz, we have
\begin{align*}
    |A| = \abs*{1 - \frac{\gamma_2}{\gamma_1}u\1^{\gamma_2-\gamma_1}} = \abs*{e^0 - e^{\ln\prt*{\frac{\gamma_2}{\gamma_1}u\1^{\gamma_2-\gamma_1}}}} &\leq e^{\abs*{\ln\prt*{\frac{\gamma_2}{\gamma_1}\norm*{u\1(t)}_{L^\infty(\T^d)}^{\gamma_2-\gamma_1}}}}\abs*{\ln\prt*{\frac{\gamma_2}{\gamma_1}u\1^{\gamma_2-\gamma_1}}}\\
    &\leq C(\Gamma,T)\prt*{\ln\prt*{\frac{\gamma_2}{\gamma_1}} + \prt*{\gamma_2-\gamma_1}\abs*{\ln u\1}}.
\end{align*}
Using the elementary bound $\ln s \leq s-1$ for $s\geq 1$, we estimate
\begin{equation*}
    \ln\prt*{\frac{\gamma_2}{\gamma_1}} \leq \frac{\gamma_2-\gamma_1}{\gamma_1} \leq \frac{1}{\gmin}\prt*{\gamma_2-\gamma_1},
\end{equation*}
while using the lower and upper bounds on $u\1$,
\begin{equation*}
    \abs*{\ln u\1(t)} \leq \abs*{\ln\norm{u\1(t)}_{L^\infty(\T^d)}}+ G_MT + \abs*{\ln\delta} \leq C(T) + \abs*{\ln\delta}.
\end{equation*}
Therefore, we have
\begin{equation}
\label{eq:A_estimate}
    |A| \leq C(\Gamma,T)\prt*{\gamma_2-\gamma_1}\prt*{1+\abs*{\ln\delta}}.
\end{equation}
Next, we focus on the term $B$. Since $\gamma_2-\gamma_1 < 1$ and $\gamma_1>1$, $\gamma_2 < 2\gamma_1$. Then,
\begin{equation*}
    u\1^{\gamma_1\prt*{\frac{\gamma_2}{\gamma_1}-2}} \leq C(\Gamma,T)\delta^{-\gamma_1\prt*{2-\frac{\gamma_2}{\gamma_1}}},
\end{equation*}
and we have
\begin{equation}
\label{eq:B_estimate}
    |B| \leq C(\Gamma,T)\prt*{\gamma_2-\gamma_1}\delta^{-\gamma_1\prt*{2-\frac{\gamma_2}{\gamma_1}}}.
\end{equation}
Putting together equation~\eqref{eq:L_estimate} and estimates~\eqref{eq:A_estimate} and~\eqref{eq:B_estimate}, we deduce
\begin{align*}
    &\norm*{\Delta u\1^{\gamma_1}-\Delta u\1^{\gamma_2}}_{L^1((0,T)\times\T^d)}\\
    &\quad\leq \norm*{A}_{L^\infty((0,T)\times\T^d)}\norm*{\Delta u\1^{\gamma_1}}_{L^1((0,T)\times\T^d)} + \norm*{B}_{L^\infty((0,T)\times\T^d)}\norm*{\nabla u\1^{\gamma_1}}_{L^2((0,T)\times\T^d)}\\
    &\quad\leq C(\Gamma, T)\prt*{1+|\ln\delta|+\delta^{-a}}\abs*{\gamma_2-\gamma_1},
\end{align*}
where
\begin{equation*}
    a = 2\min\prt*{\gamma_1,\gamma_2}-\max\prt*{\gamma_1,\gamma_2},
\end{equation*}
as claimed.
\end{proof}

Below we prove the main result of this section, the conclusion of Theorem~\ref{thm:main} in the regularised situation.
\begin{lemma}
    \label{lem:ThmRegular}
        Let $u\1$ and $u\2$ be two solutions of~\eqref{nonlinear_diffusion_regular} with initial data $u_{0,\gamma_1}$ and $u_{0,\gamma_2}$. Then
    \begin{equation*}
        \label{RegEstimate}
    \begin{aligned}    
        \sup_{t\in[0,T]}\norm*{u\1(t) - u\2(t)}_{L^1(\T^d)} &\leq e^{G_M T}\norm*{u_{0,\gamma_1} - u_{0,\gamma_2}}_{L^1(\T^d)}\\
        &\quad+ C(\Gamma,L)\prt*{1+|\ln\delta|+\delta^{-a}}\abs*{\gamma_2-\gamma_1}.
    \end{aligned}
    \end{equation*}
\end{lemma}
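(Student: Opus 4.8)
The plan is to derive an equation for the difference $w := u\1 - u\2$ and run a Gronwall-type argument in $L^1(\T^d)$, exactly as in the proof of Proposition~\ref{prop:ContractionPrinciple}, but now tracking the mismatch caused by the two different diffusion exponents. Writing both equations from~\eqref{nonlinear_diffusion_regular} and subtracting, I get
\begin{equation*}
    \partial_t w - \Delta\prt*{u\1^{\gamma_1} - u\2^{\gamma_2}} = u\1 G(p\1) - u\2 G(p\2).
\end{equation*}
The key trick is to insert the intermediate term $u\1^{\gamma_2}$ into the Laplacian: write $u\1^{\gamma_1} - u\2^{\gamma_2} = \prt*{u\1^{\gamma_1} - u\1^{\gamma_2}} + \prt*{u\1^{\gamma_2} - u\2^{\gamma_2}}$, so that the second bracket is a difference of like powers (to be handled by the monotonicity/Kato argument) and the first bracket is precisely the quantity estimated in Lemma~\ref{lem:LapEstimate}.

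Next I would multiply the resulting equation by $\sign(w)$. On the right-hand side, splitting $u\1 G(p\1) - u\2 G(p\2) = w\, G(p\1) + u\2\prt*{G(p\1) - G(p\2)}$ and using $\sign(w) = \sign(u\1 - u\2) = \sign(p\1 - p\2)$ together with $G' < 0$, the second term contributes something nonpositive, while the first is bounded by $G_M\abs*{w}$. For the diffusion part, the term $\Delta\prt*{u\1^{\gamma_2} - u\2^{\gamma_2}}$ multiplied by $\sign(w) = \sign\prt*{u\1^{\gamma_2} - u\2^{\gamma_2}}$ gives, after integration, a term of the form $\int_{\T^d}\Delta\abs*{u\1^{\gamma_2}-u\2^{\gamma_2}}\,\mathrm{d}x$ — which vanishes on the torus (no boundary) once the Kato inequality $\Delta f \cdot \sign(f) \leq \Delta\abs{f}$ is invoked — and the remaining term $\Delta\prt*{u\1^{\gamma_1} - u\1^{\gamma_2}}$ is controlled in $L^1((0,T)\times\T^d)$ directly by Lemma~\ref{lem:LapEstimate}. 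Collecting everything yields
\begin{equation*}
    \ddt\int_{\T^d}\abs*{w}\,\mathrm{d}x \leq G_M\int_{\T^d}\abs*{w}\,\mathrm{d}x + \norm*{\Delta u\1^{\gamma_1} - \Delta u\1^{\gamma_2}}_{L^1(\T^d)}.
\end{equation*}

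Finally I would apply the integral form of Gronwall's inequality: integrating in time, the forcing term integrates to $\norm*{\Delta u\1^{\gamma_1}-\Delta u\1^{\gamma_2}}_{L^1((0,T)\times\T^d)} \leq C(\Gamma,T)\prt*{1+|\ln\delta|+\delta^{-a}}\abs*{\gamma_2-\gamma_1}$ by Lemma~\ref{lem:LapEstimate}, and the Gronwall factor produces the $e^{G_M T}$ in front of both the initial-data term $\norm*{u_{0,\gamma_1}-u_{0,\gamma_2}}_{L^1(\T^d)}$ and (absorbed into $C$) the exponent-mismatch term, giving the claimed bound. The main obstacle is the bookkeeping around the degenerate diffusion term: one must be careful that the renormalisation (multiplying by $\sign w$ and passing to $\abs{w}$) is legitimate despite the nonlinearity, and that the intermediate insertion of $u\1^{\gamma_2}$ is done so that the "bad" piece is exactly the one Lemma~\ref{lem:LapEstimate} bounds while the "good" piece retains the sign structure needed to kill it on the torus; the rest is a routine Gronwall argument. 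As with the earlier propositions, these manipulations are formal and can be justified by standard renormalisation/doubling-of-variables techniques.
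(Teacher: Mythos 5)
Your overall architecture --- subtract the two equations, insert the intermediate power $u_{\gamma_1}^{\gamma_2}$ into the Laplacian so that Lemma~\ref{lem:LapEstimate} absorbs the exponent mismatch while the like-power difference $u_{\gamma_1}^{\gamma_2}-u_{\gamma_2}^{\gamma_2}$ is handled by Kato's inequality and integrates to zero on the torus, then Gronwall --- is exactly the paper's. The genuine gap is in your treatment of the reaction term. You dispose of $u_{\gamma_2}\bigl(G(p_{\gamma_1})-G(p_{\gamma_2})\bigr)\sign(u_{\gamma_1}-u_{\gamma_2})$ by asserting $\sign(u_{\gamma_1}-u_{\gamma_2})=\sign(p_{\gamma_1}-p_{\gamma_2})$, but that identity only holds when both pressures are computed with the \emph{same} exponent, as in Proposition~\ref{prop:ContractionPrinciple}. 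Here $p_{\gamma_i}=\tfrac{\gamma_i}{\gamma_i-1}u_{\gamma_i}^{\gamma_i-1}$ with $\gamma_1\neq\gamma_2$: already $u_{\gamma_1}=u_{\gamma_2}$ gives $p_{\gamma_1}\neq p_{\gamma_2}$ in general, and one can have $u_{\gamma_1}>u_{\gamma_2}$ while $p_{\gamma_1}<p_{\gamma_2}$. So this term is not sign-definite and cannot simply be dropped.

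The statement still holds, because the offending term is itself of size $O(|\gamma_2-\gamma_1|)$, but establishing that is a nontrivial piece of the proof. The paper writes $G(p_{\gamma_1})-G(p_{\gamma_2})=G'(\bar p)(p_{\gamma_1}-p_{\gamma_2})$ by the Mean Value Theorem and then decomposes
\begin{equation*}
p_{\gamma_1}-p_{\gamma_2}=\frac{\gamma_2}{\gamma_2-1}\Bigl[\bigl(u_{\gamma_1}^{\gamma_2-1}-u_{\gamma_2}^{\gamma_2-1}\bigr)-\bigl(u_{\gamma_1}^{\gamma_2-1}-u_{\gamma_1}^{\gamma_1-1}\bigr)\Bigr]+\frac{\gamma_2-\gamma_1}{(\gamma_1-1)(\gamma_2-1)}u_{\gamma_1}^{\gamma_1-1}.
\end{equation*}
Only the first bracketed piece is a difference of \emph{like} powers, so only there does your sign argument apply (it produces $G'(\bar p)\,\abs{u_{\gamma_1}^{\gamma_2-1}-u_{\gamma_2}^{\gamma_2-1}}\le 0$ and can be dropped). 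The second piece is estimated by another Mean Value Theorem application, $u_{\gamma_1}^{\gamma_2-1}-u_{\gamma_1}^{\gamma_1-1}=(\gamma_2-\gamma_1)u_{\gamma_1}^{\bar\gamma-1}\ln u_{\gamma_1}$, together with the elementary bound $\abs{s^{\bar\gamma}\ln s}\le \tfrac{1}{e\bar\gamma}+s^{\bar\gamma+1}$, and the third is directly of order $\gamma_2-\gamma_1$; both yield $C(\Gamma,L)\abs{\gamma_2-\gamma_1}$, which is absorbed into the final constant. You need to supply this step (or an equivalent one) for the proof to close.
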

\begin{proof}
    Without loss of generality, let us assume that $\gamma_2\geq\gamma_1$. Consider the difference between the equations for the two densities, namely
    \begin{equation}
    \label{eq:DifferenceReg}
        \partial_t \prt*{u\1 - u\2} - \prt*{\Delta u\1^{\gamma_1}-\Delta u\2^{\gamma_2}} = u\1 G(p\1) - u\2 G(p\2).
    \end{equation}
We rewrite the second term as
\begin{equation*}
    \Delta u\1^{\gamma_1}-\Delta u\2^{\gamma_2} = \Delta\prt*{u\1^{\gamma_2} - u\2^{\gamma_2}} + \Delta\prt*{u\1^{\gamma_1} - u\1^{\gamma_2}},
\end{equation*}
and the reaction term as
\begin{equation*}
    u\1 G(p\1) - u\2 G(p\2) = \prt*{u\1 - u\2} G(p\2) + u\1\prt*{G(p\1) - G(p\2)}.
\end{equation*}
Now, mulitplying~\eqref{eq:DifferenceReg} by $\sign(u\1-u\2) = \sign(u\1^{\gamma_2}-u\2^{\gamma_2})$ and using Kato's inequality, we obtain
\begin{equation*}
    \begin{aligned}
        &\partial_t\abs*{u\1 - u\2} - \Delta \abs*{u\1^{\gamma_2} - u\2^{\gamma_2}} - \abs*{u\1 - u\2} G(p\2)\\
        &\leq \Delta\prt*{u\1^{\gamma_1} - u\1^{\gamma_2}}\sign(u\1-u\2) + u\1\prt*{G(p\1) - G(p\2)}\sign(u\1-u\2).
    \end{aligned}
\end{equation*}
Integrating in space leads to
\begin{equation}
\label{eq:DifferenceReg2}
\begin{aligned}
    \ddt \norm*{u\1(t) - u\2(t)}_{L^1(\T^d)} &\leq G_M \norm*{u\1(t) - u\2(t)}_{L^1(\R^d)} + \norm*{\Delta\prt*{u\1^{\gamma_1} - u\1^{\gamma_2}}}_{L^1(\T^d)}\\
    &\quad +\int_{\T^d}u\1\prt*{G(p\1) - G(p\2)}\sign(u\1-u\2) \dx x, 
\end{aligned}
\end{equation}
where $G_M=\max_{0\leq p \leq P_H}|G(p)|$ is a finite constant.
Given the result of Lemma~\ref{lem:LapEstimate}, it remains to estimate the reaction term in~\eqref{eq:DifferenceReg2}.
Applying the Mean Value Theorem, we have
\begin{equation*}
    G(p\1) - G(p\2) = G'(\bar p)\prt*{p\1-p\2},
\end{equation*}
for some intermediate value $\bar p \in (\min(p\1,p\2), \max(p\1,p\2))$. We now rewrite the difference of the pressures as follows:
\begin{align*}
    p\1-p\2 &= \frac{\gamma_1}{\gamma_1-1}u\1^{\gamma_1-1} -  \frac{\gamma_2}{\gamma_2-1}u\2^{\gamma_2-1}\\
    &= \frac{\gamma_2}{\gamma_2-1}\prt*{u\1^{\gamma_1-1}-u\2^{\gamma_2-1}} + \prt*{\frac{\gamma_1}{\gamma_1-1}-\frac{\gamma_2}{\gamma_2-1}}u\1^{\gamma_1-1}\\
    &= \frac{\gamma_2}{\gamma_2-1}\brk*{\prt*{u\1^{\gamma_2-1}-u\2^{\gamma_2-1}} - \prt*{u\1^{\gamma_2-1}-u\1^{\gamma_1-1}}} + \frac{\gamma_2-\gamma_1}{(\gamma_1-1)(\gamma_2-1)}u\1^{\gamma_1-1}.
\end{align*}
Therefore, we can consider the integral of~\eqref{eq:DifferenceReg2} as three different terms, which we investigate separately. First, we observe
\begin{align*}
    \frac{\gamma_2-\gamma_1}{(\gamma_1-1)(\gamma_2-1)}&\int_{\T^d} G'(\bar p) u\1 u\1^{\gamma_1-1} \sign(u\1-u\2) \dx x \\ 
    &\leq \frac{\gamma_2-\gamma_1}{(\gmin-1)^2}\norm{G'}_{L^\infty}\norm{u\1(t)}_{L^{\gamma_1}(\T^d)}^{\gamma_1}\\
    &\leq C(\Gamma, L)\prt*{\gamma_2-\gamma_1}.
\end{align*}
Second, since $G$ is a decreasing function,
\begin{equation*}
    \frac{\gamma_2}{\gamma_2-1} u\1 \prt*{u\1^{\gamma_2-1}-u\2^{\gamma_2-1}}\sign(u\1-u\2) G'(\bar p) = \frac{\gamma_2}{\gamma_2-1} u\1 \abs*{u\1^{\gamma_2-1}-u\2^{\gamma_2-1}} G'(\bar p) \leq 0,
\end{equation*}
hence the corresponding integral can be dropped from~\eqref{eq:DifferenceReg2}. Finally, using the Mean Value Theorem again, we write
\begin{equation*}
    u\1^{\gamma_2-1}-u\1^{\gamma_1-1} = (\gamma_2-\gamma_1) u\1^{\bar\gamma-1}\ln u\1,\quad\text{for some $\bar\gamma\in(\gamma_1,\gamma_2)$}.
\end{equation*}
We can then estimate the last integral as
\begin{align*}
    -\frac{\gamma_2}{\gamma_2-1}&\int_{\T^d}u\1 G'(\bar p)\prt*{u\1^{\gamma_2-1}-u\1^{\gamma_1-1}}\sign(u\1-u\2) \dx x\\
    &\leq \frac{\gmin}{\gmin-1}\norm{G'}_{L^\infty}\prt*{\gamma_2-\gamma_1} \int_{\T^d} |u\1^{\bar\gamma}\ln u\1|\dx x\\
    &\leq \frac{\gmin}{\gmin-1}\norm{G'}_{L^\infty}\prt*{\frac{1}{e\bar\gamma}\abs{\T^d} + \norm{u\1}_{L^{\bar\gamma+1}(\T^d)}^{\bar\gamma+1}}\prt*{\gamma_2-\gamma_1}\\
    &\leq C(\Gamma, L)\prt*{\gamma_2-\gamma_1}.
\end{align*}

Returning to~\eqref{eq:DifferenceReg2}, we now have
\begin{equation*}
\label{eq:DifferenceReg3}
\begin{aligned}
    \ddt \norm*{u\1(t) - u\2(t)}_{L^1(\T^d)} \leq G_M \norm*{u\1(t) - u\2(t)}_{L^1(\T^d)} &+ C(\Gamma,L)\abs*{\gamma_2-\gamma_1}\\
    & +\norm*{\Delta\prt*{u\1^{\gamma_1} - u\1^{\gamma_2}}}_{L^1(\T^d)}.
\end{aligned}
\end{equation*}
Using Lemma~\ref{lem:LapEstimate}, this becomes
\begin{equation*}
\label{eq:DifferenceReg4}
\begin{aligned}
    &\ddt \norm*{u\1(t) - u\2(t)}_{L^1(\T^d)}\\
    &\quad\leq G_M \norm*{u\1(t) - u\2(t)}_{L^1(\T^d)} + C(\Gamma,L)\prt*{1+|\ln\delta|+\delta^{-a}}\abs*{\gamma_2-\gamma_1}.
\end{aligned}
\end{equation*}
Applying Gronwall's lemma, we deduce
\begin{align*}
    &\norm*{u\1(t) - u\2(t)}_{L^1(\T^d)}\\ &\quad\leq e^{G_M t}\norm*{u_{0,\gamma_1} - u_{0,\gamma_2}}_{L^1(\T^d)} + C(\Gamma,L)\prt*{1+|\ln\delta|+\delta^{-a}}\abs*{\gamma_2-\gamma_1},
\end{align*}
as required.
\end{proof}

\section{Proof of the main theorem}
\label{sec:MainProof}
First, let us recall that, without loss of generality, we only consider the case when $0<|\gamma_2-\gamma_1|<1$.
Given two solutions as in the statement of Theorem~\ref{thm:main}, we apply the triangle inequality to write
\begin{equation*}
\label{Triangle}
\begin{aligned}
     &\norm*{u\1(t) - u\2(t)}_{L^1(\R^d)} = \norm*{u\1(t) - u\2(t)}_{L^1(\T^d)}\\
     &\;\leq \norm*{u\1(t) - u_{\gamma_1,\delta}(t)}_{L^1(\T^d)} +  \norm*{u\2(t) - u_{\gamma_2,\delta}(t)}_{L^1(\T^d)} +  \norm*{u_{\gamma_1,\delta}(t) - u_{\gamma_2,\delta}(t)}_{L^1(\T^d)}.
\end{aligned}
\end{equation*}
By virtue of Proposition~\ref{prop:ContractionPrinciple} and Lemma~\ref{lem:ThmRegular}, we have
\begin{align*}
    &\norm*{u\1(t) - u\2(t)}_{L^1(\R^d)}\\
    &\leq C(T)\norm*{u_{0,\gamma_1} - u_{0,\gamma_2}}_{L^1(\R^d)} + C(T)\abs{\T^d}\delta + C(\Gamma,L)\prt*{1+|\ln\delta|+\delta^{-a}}\abs*{\gamma_2-\gamma_1}\\
    &\leq C(T)\norm*{u_{0,\gamma_1} - u_{0,\gamma_2}}_{L^1(\R^d)} + C(\Gamma,L)\abs*{\gamma_2-\gamma_1}\\
    &\quad+ C(\Gamma,L)\brk*{\prt*{|\ln\delta|+\delta^{-a}}\abs*{\gamma_2-\gamma_1}+\delta}.
\end{align*}
This inequality holds for any $\delta>0$, but the right-hand side blows up as $\delta\to 0$ and as $\delta\to\infty$. Therefore, we need to optimise in $\delta$ in terms of $|\gamma_2-\gamma_1|$.
First, we observe the bound
\begin{equation*}
    \abs*{\ln\delta} \leq \delta + \delta^{-a}.
\end{equation*}
Now, we investigate the strictly convex function
\begin{equation*}
    (0,\infty)\ni \delta \mapsto \delta + \prt*{\delta + \delta^{-a}}|\gamma_2-\gamma_1|,
\end{equation*}
which attains its unique global minimum at
\begin{equation*}
    \delta_{\mathrm{min}} = \prt*{\frac{a}{\abs*{\gamma_2-\gamma_1}+1}}^{\frac{1}{a+1}}\abs*{\gamma_2-\gamma_1}^{\frac{1}{a+1}}.
\end{equation*}
With this value of $\delta$, we arrive at the bound
\begin{equation}
    \label{eq:FinalBound_0}
\begin{aligned}
    \norm*{u\1(t) - u\2(t)}_{L^1(\R^d)}
    &\leq C(T)\norm*{u_{0,\gamma_1} - u_{0,\gamma_2}}_{L^1(\R^d)} + C(\Gamma,L)\abs*{\gamma_2-\gamma_1}\\
    &\quad+ C(\Gamma,L)\brk*{\prt*{\delta_{\mathrm{min}}+\delta_{\mathrm{min}}^{-a}}\abs*{\gamma_2-\gamma_1}+\delta_{\mathrm{min}}}.
\end{aligned}
\end{equation}
Notice that
\begin{equation*}
     \prt*{\frac{a}{\abs*{\gamma_2-\gamma_1}+1}}^{\frac{1}{a+1}} \leq a^{\frac{1}{a+1}} \leq 2.
\end{equation*}
Therefore, we have $\delta_{\mathrm{min}} \leq 2\abs*{\gamma_2-\gamma_1}^{\frac{1}{a+1}}$, and consequently
\begin{equation*}
    \delta_{\mathrm{min}}\abs*{\gamma_2-\gamma_1} + \delta_{\mathrm{min}} \leq C\abs*{\gamma_2-\gamma_1}^{\frac{1}{a+1}}.
\end{equation*}
For the remaining term, we observe that
\begin{align*}
    \delta_{\mathrm{min}}^{-a}\abs*{\gamma_2-\gamma_1} &= \prt*{\frac{a\abs*{\gamma_2-\gamma_1}}{\abs*{\gamma_2-\gamma_1}+1}}^{-\frac{a}{a+1}}\abs*{\gamma_2-\gamma_1}\\
    &= \prt*{\frac{1}{a}}^{\frac{a}{a+1}}\prt*{1+\frac{1}{\abs*{\gamma_2-\gamma_1}}}^{\frac{a}{a+1}}\abs*{\gamma_2-\gamma_1}\\
    &\leq C\prt*{\frac{2}{\abs*{\gamma_2-\gamma_1}}}^{\frac{a}{a+1}}\abs*{\gamma_2-\gamma_1}\\
    &\leq C\abs*{\gamma_2-\gamma_1}^{\frac{1}{a+1}}.
\end{align*}
Using the last two bounds, estimate~\eqref{eq:FinalBound_0} becomes
\begin{align*}
    \norm*{u\1(t) - u\2(t)}_{L^1(\R^d)}
    &\leq C(T)\norm*{u_{0,\gamma_1} - u_{0,\gamma_2}}_{L^1(\R^d)} + C(\Gamma,L)\abs*{\gamma_2-\gamma_1}^{\frac{1}{a+1}}\\
    &\leq C(T)\norm*{u_{0,\gamma_1} - u_{0,\gamma_2}}_{L^1(\R^d)} + C(\Gamma,L)\abs*{\gamma_2-\gamma_1}^{\frac{1}{\min(\gamma_1,\gamma_2)+1}}.
\end{align*}
\qed

\begin{remark}
    Let us point out an immediate extension of Theorem~\ref{thm:main}. In a more realistic model, the growth function $G$ should also depend on a parameter, say $G=G(\beta, p)$, where $\beta \in [\beta_{\mathrm{min}}, \beta_{\mathrm{max}}]$. Assuming that the dependence of $G$ on $\beta$ is Lipschitz, we easily deduce stability of model~\eqref{nonlinear_diffusion_0} with respect to the pair of parameters $(\beta, \gamma)$. Indeed, we have
    \begin{align*}
        \norm*{u_{\beta_1, \gamma_1}(t) - u_{\beta_2, \gamma_2}(t)}_{L^1(\R^d)} &\leq \norm*{u_{\beta_1, \gamma_1}(t) - u_{\beta_1, \gamma_2}(t)}_{L^1(\R^d)} + \norm*{u_{\beta_1, \gamma_2}(t) - u_{\beta_2, \gamma_2}(t)}_{L^1(\R^d)}\\
        &\lesssim \abs*{\beta_1-\beta_2} + \abs*{\gamma_2-\gamma_1}^{\frac{1}{\min(\gamma_1,\gamma_2)+1}}.
    \end{align*}
\end{remark}

\section{Conclusions and perspectives}\label{sec:Discussion}

In this paper we addressed an analytical problem of significant practical relevance. We consider a well-known macroscopic tissue growth model wherein the cancer cell density obeys a nonlinear diffusion equation with a growth term. Our main result is to demonstrate the stability of the solutions of this model with respect to the diffusion exponent. This result is necessary to apply the inverse problem methodology, which arises at least in two classes of problems (i) parameter estimation based on experimental data sets and (ii) qualitative inverse problems, the aim of which is to reverse engineer the studied phenomena. The benefits of using the inverse problem approach, in which model parameters are determined based on observed values, are nowadays widely acknowledged particularly in various engineering problems. However, this concept was not sufficiently used in the mathematical modelling of cancer growth and spread, or more generally in mathematical oncology. An example of the application of the inverse problem approach in cancer modelling are the recent works on the nonlocal model of cell proliferation in which the Bayesian inference is used to validate and calibrate the proposed model~\cite{zuzanna_szymanska_2021,gwiazda2023}.

The methodology of inverse problems arises in both deterministic approaches, involving the minimization of a predefined cost function, and stochastic approaches, based on Bayesian methods. 
In the former framework, one seeks to choose those values of the model parameters, so that the solution of the resulting PDE yields the best possible approximation of the measured data. In the context of an in vitro experiment for tumour growth, we can assume that the number of cells is known in disjoint cubes $Q_i$ of side length $1/n$, where $i=1,\dots,n$ and $Q_i$ decompose the total rectangular domain $Q$. Expressing these numbers of cells in terms of a cell number density, $u$, we assume that the following quantities are known
\begin{equation*}
    a_i := \int_{Q_i} u(t,x)\dx x,
\end{equation*}
for $i=1,\dots, n$. The space of all possible densities $u$ can then be described as
\begin{equation*}
    A_n:=\set*{u\in BV(Q),\; \norm*{u}_{BV}\leq 1 \; \big|\; a_i := \int_{Q_i} u(t,x)\dx x\quad \forall i=1,\dots, n}.
\end{equation*}
In this setting, we may seek to solve the minimisation problem
\begin{equation}
\label{L1_min}
    \inf_{\gamma}\; \norm{u_\gamma - u}_{L^1(Q)},
\end{equation}
where $u\in A_n$ and $u_\gamma$ is the density predicted by equation~\eqref{eq:PDEintro}.
A simple calculation shows that the $L^1$-diameter of the set $A_n$ is of order $1/n$. Therefore, the above problem is well-defined in the sense that in the limit $n\to\infty$, the error calculated in~\eqref{L1_min} does not depend on the choice of the representative from $A_n$. Let us however point out the importance of the choice of the functional setting:\ if the set $A_n$ required merely that $u\in L^1$, its $L^1$-diameter would be constant in $n$ and the minimisation of the $L^1$-error would not make much sense. The tissue growth model we consider is well-suited for this situation -- it does propagate $BV$ regularity and we will establish its stability in the $L^1$-norm. Of course, one may consider the minimisation of more general error functionals in~\eqref{L1_min}, based upon more complicated cost functions. This approach is well known for instance in the context of traffic flow modelling~\cite{ColomboCorli}.

Within the Bayesian approach, the model's parameters are treated as random variables described with probability distributions. 
As mentioned earlier, the model may typically depend on a vector of parameters. However, in the context of our current discussion, we deal with a one-dimensional vector comprising only one parameter, $\gamma$.
The first step in the procedure of model calibration is to define $\pi(\gamma)$, the prior probability density, that is the probability density of $\gamma$ without any knowledge of data. Of course, this is subject to choice based on intuition, experience, assumptions, or even a simple guess. Subsequently, one defines a model for data collection described by the likelihood function $L(\textsc{Data}|\gamma)$ that quantifies the probability of observing data, given the parameter $\gamma$.
Bayesian inference is based on posterior distribution $\pi(\gamma|\textsc{Data})$, that is the conditional distribution of a parameter $\gamma$, given the observed data. The posterior distribution is proportional to $\pi(\gamma)L(\textsc{Data}|\gamma)$, according to Bayes' Theorem. The procedure involves updating our belief about a parameter based on experimental data. Typically, obtaining an analytic form of the joint posterior distribution for the basic parameters is unfeasible. Instead, their marginal distributions need to be estimated using an appropriate numerical method.

Without the stability of solutions with respect to parameters, solving the inverse problem becomes unfeasible, as both stochastic and deterministic approaches rely on numerical approximation, which involves addressing finite-dimensional problems. Note that the finite-dimensional approximation is stable with respect to parameter $\gamma$. Thus, if instability arises while solving equation~\eqref{nonlinear_diffusion}, it indicates that, for certain values of $\gamma$, the numerical scheme experiences uncontrolled errors. Moreover, the Bayesian inference needs another numerical procedure to explore posterior $\pi(\gamma|\textsc{Data})$. The stability of the solution concerning $\gamma$ is also required to design an efficient Monte Carlo algorithm. For instance, the performance of the Metropolis-Hastings algorithm, which is a widely used MC method, depends on the acceptance rate of proposed Markov chain states. Typically, the likelihood $L$ is a locally Lipshitz function of solution $u_\gamma$ and further, given our main result, the logarithm of the acceptance ratio from state $\gamma$ to proposed $\gamma'$ can be bounded by
\begin{equation*}
\log\Bigg( \frac{\pi(\gamma'|\textsc{Data})}{\pi(\gamma|\textsc{Data})} \frac{Q(\gamma',\gamma)}{Q(\gamma,\gamma')}\Bigg) \leq C|\gamma - \gamma'|^{\tfrac{1}{1+\min(\gamma,\gamma')}}
\end{equation*} 
for some constant $C$, where $Q$ is the proposal kernel. Thus, a slight perturbation in $\gamma$ would likely be accepted, a scenario that becomes unattainable if the solution proves unstable concerning $\gamma$. This instability presents analogous challenges for other MCMC algorithms.

\subsection*{Acknowledgements} 
\noindent 
T.D.\ acknowledges the support of the National Agency for Academic Exchange, agreement no. BPN/BDE/2023/1/00011/U/00001.
P.G., B.M., and Z.S. are happy to acknowledge the support from the ''Inverse Problems in Tissue Growth Models'' grant funded by the Excellence Initiative - Research University Programme at the University of Warsaw.

\end{document}